\algrenewcommand{\algorithmiccomment}[1]{\hskip\algorithmicindent \# #1}
\theoremstyle{plain}
\newtheorem{theorem}{Theorem}
\newtheorem{definition}{Definition}
\newcommand{\R}{\mathbb R}
\newcommand{\D}{\mathscr{D}}
\newcommand{\A}{\mathscr{A}}
\newcommand{\dd}{\mathrm{d}}
\newcommand{\T}{\mathcal{T}}
\newcommand{\SSS}{\mathscr{S}}
\newcommand{\pfreeB}{\langle \Gamma, \mathscr{S}\rangle}
\begin{document}

\title{Finite element approximations of minimal surfaces:
 algorithms and mesh refinement}

\author{
  Aymeric Grodet\thanks{Corresponding author} \thanks{E-mail addresses: aymeric.grodet@gmail.com,
    tsuchiya@math.sci.ehime-u.ac.jp.}
  \and
  Takuya Tsuchiya\footnotemark[2]
}
\date{Graduate School of Science and Engineering, \\Ehime University, 2-5, Bunkyo-cho, Matsuyama, Japan}

\maketitle

\begin{abstract}
Finite element approximations of minimal surface are not always precise. 
They can even sometimes completely collapse. 
In this paper, we provide a simple and inexpensive method, in terms of computational cost, 
to improve finite element approximations of minimal surfaces 
by local boundary mesh refinements. 
By highlighting the fact that a collapse is simply 
the limit case of a locally bad approximation, 
we show that our method can also be used to avoid the collapse of finite element approximations. 
We also extend the study of such approximations 
to partially free boundary problems and give a theorem for their convergence. 
Numerical examples showing improvements induced  by the method are given throughout the paper.
\paragraph{Keywords}
	Minimal surfaces \and Finite element method \and Mesh refinement \and Plateau problem
\paragraph{AMS subject classifications}
	49Q05, 65N30

\end{abstract}

\section{Introduction}
\label{sec:intro}
Let $D = \left\{(u,v)\in\mathbb{R}^2 \,\middle|\, u^2+v^2<1\right\}$
 be the unit disk and $\partial D = S^1$ be its boundary.
Let $\varphi:\overline{D} \to \R^d$ $(d \ge 2)$ be a map
that is sufficiently smooth and $\operatorname{rank} D\varphi = 2$ 
almost everywhere in $D$, where $D\varphi$ is the Jacobi matrix of $\varphi$.
By this assumption, its image $\varphi(D) \subset \R^d$ is a
two-dimensional surface, possibly with self-intersections. 
In this paper, we refer to the map as $\varphi$ and to the surface as 
$\varphi(D)$.  If the mean curvature of $\varphi$ vanishes at each point
on $\varphi(D)$, then $\varphi(D)$ is called a \emph{minimal surface}.

Let $\Gamma \subset \R^d$ $(d \ge 2)$  be an arbitrary Jordan curve.
That is, $\Gamma$ is the image of a continuous embedding of $\partial D$
into $\R^d$. We would like to find minimal surfaces $\varphi$ spanned in
 $\Gamma$, that is, 
\begin{equation*}
  \varphi : \overline{D} \to \mathbb{R}^d
   \text{ with } \varphi(\partial D) = \Gamma.
\end{equation*}
For a given Jordan curve $\Gamma$, the problem of finding
minimal surfaces spanned in $\Gamma$ is called
the (\emph{classical}) \emph{Plateau problem}
\cite{Courant},  \cite{DHS}, \cite{Osserman}.
For the Plateau problem, the following variational principle
has been known \cite{Courant}, \cite{DHS}:\\
Define the subset
$X_\Gamma$ of $C(\overline{D};\R^d)\cap H^1(D;\R^d)$ by
\begin{equation}
 X_\Gamma:=\Bigl\{\psi\in C(\overline{D};\R^d)\cap
 H^1(D;\R^d)\Bigm| \psi(\partial D)= \Gamma\text{ and }
 \psi|_{\partial D} \text{ is monotone} \Bigr\},
 \label{eq1.1}
\end{equation}
where $\psi|_{\partial D}$ being monotone means that
$(\psi|_{\partial D})^{-1}(p)$ is connected for any $p\in\Gamma$. 
Although $\psi|_{\partial D}$ has to be onto, it does not need to be one-to-one.
We denote the Dirichlet integral (or the energy functional)
on $D$ for $\varphi=(\varphi_1, \cdots, 
\varphi_d)^\top\in H^1(D;\R^d)$ by
\begin{equation}
 \D(\varphi):= \frac12 \int_D|\nabla \varphi|^2 \dd x
 = \frac12 \sum_{i=1}^d\int_D |\nabla\varphi_i|^2 \dd x.
  \label{dirichlet_int}
\end{equation}
Then, \emph{$\varphi\in X_\Gamma$ is a minimal surface 
if and only if $\varphi\in X_\Gamma$ is a stationary point 
of the functional $\D$ in $X_\Gamma$.}
Moreover, we have
\begin{equation*}
 \operatorname{Area}(\varphi(D)) = \D(\varphi) = \inf_{\psi\in X_\Gamma}\D(\psi). 
\end{equation*}

To obtain numerical approximations for the Plateau problem, the
piecewise linear finite element method has been applied 
\cite{Tsuchiya1,Tsuchiya2,Tsuchiya3,Tsuchiya4,Tsuchiya5}.  Firstly, functions of $X_\Gamma$
are approximated by piecewise linear functions on a triangulation of $D$. 
Then, starting from a suitable
initial surface, stationary surfaces of the Dirichlet integral are
computed by a relaxation procedure. This method has the advantage to be
quite simple and straightforward to put in use.
In the relaxation procedure, the images of inner nodes are moved 
$d$-dimensionally by, for example, Gauss-Seidel method, and the images
of nodes on $\partial D$ are moved through $\Gamma$ by, for example,
Newton method. See \cite[Figure~1]{Tsuchiya1}. 
Because the images of nodes on $\partial D$ will move rather freely on
$\Gamma$, finite element approximations can be very poor, sometimes even 
collapse, if the configuration of $\Gamma$ is not simple enough.

The first aim of this paper is to provide a technique to overcome this
difficulty.  We will present a straightforward adaptive mesh 
refinement algorithm that is almost inexpensive in terms of computational cost.
The details of our refinement technique will be explained in Section~\ref{sec:fea}.

The second aim of this paper is to extend the results of finite element (FE) 
minimal surfaces to the case of minimal surfaces with \emph{partially
free boundary}.  Let a smooth surface $\SSS \subset \R^d$ be given.
Suppose that $\Gamma \subset \R^d$ is now a piecewise smooth curve with
end-points on $\SSS$.  That is, $\Gamma$ is the image of a smooth map
$\gamma:[-1,1] \to \R^d$ with $\gamma(-1), \gamma(1) \in \SSS$.
We would like to find minimal surfaces $\varphi$ such that
$\varphi(\partial D) \subset \Gamma \cup \SSS$.  Note that the image
of $\varphi$ on $\SSS$ is not known \emph{a priori}.  In Section~\ref{sec:pfb},
we will show that our methodology works well to obtain FE
minimal surfaces with partially free boundary.  We will also give a
theorem for convergence of FE minimal surfaces.

In Section~\ref{sec:data}, we will discuss some useful data structures and 
specify a general algorithm to compute FE approximations with the method 
that will have been discussed.

To highlight the effectiveness of the proposed method, several numerical examples will be given throughout this paper.

\section{Minimal surfaces}
\label{sec:minsurf}
\newcommand{\bfx}{\mathbf{x}}
As explained in the previous section, in this paper, \emph{surfaces} refer to mappings 
\[
   \varphi:\Omega\to\R^d, \qquad
    \varphi = (\varphi_1,\cdots,\varphi_d)^\top,
\]
from a bounded domain $\Omega\subset\R^2$ into $\R^d$
with $\operatorname{rank}D\varphi = 2$.
A point $p \in \Omega$  is always written as
$p=(u,v)^\top$. The \emph{area functional}  $\A(\varphi)$ of the surface
$\varphi$ is defined by
\begin{gather*}
    \A(\varphi)=\int_{\Omega}{}{\left|\varphi_u \wedge
\varphi_v\right|} \dd \bfx, \qquad
   \dd \bfx = \dd{u}\dd{v}, \\
  \varphi_u := \left(\frac{\partial \varphi_1}{\partial u}, \cdots,
   \frac{\partial \varphi_d}{\partial u}\right)^\top, \quad
  \varphi_v := \left(\frac{\partial \varphi_1}{\partial v}, \cdots,
   \frac{\partial \varphi_d}{\partial v}\right)^\top,
\end{gather*}
and stationary points of $\A$ are called \emph{minimal surfaces}.

The stationary points of the area functional, and in particular, its
minimizers, are the surfaces of zero mean curvature. In other
words, a map is a minimal surface if and only if its mean curvature
vanishes at any point on the surface. 

We consider the \emph{Plateau problem}. 
This problem is to find minimal surfaces mapping a bounded, 1-connected domain to a surface contoured by $\Gamma$. 
A domain is 1-connected (or simply-connected) is it is path-connected and if every path between two points can be continuously transformed into any other path with the same endpoints (informally, there is no ``hole''). 
We may assume without loss of generality that $\Omega$ is the unit disk.
In the sequel of this paper, we only consider the case $\Omega = D$.

The Belgian physicist Joseph Plateau made several
experiments with soap films. In particular, he observed that when he
dipped a frame consisting of a single closed wire into soapy water, it
would always result in a soap surface spanned in the closed wire,
whatever may be the geometrical form of the frame. From a mathematical
point of view, the single closed wire is a Jordan curve, a curve
topologically equivalent to the unit circle, and the resulting soap film
is a surface in $\R^3$. From the theory of capillary surfaces, we know
that the surface energy is proportional to its area. From these
observations, one has good reasons to think that every (rectifiable)
Jordan curve bounds at least one minimal surface. The Plateau problem
is then first to show the existence of such surfaces. The problem has been
solved for general contours by Douglas~\cite{Douglas} and
Rad\'{o}~\cite{Rado}.

Note that, for any $\psi \in X_\Gamma$, we have
\[
   \A(\psi) = \A(\psi\circ\eta) \quad
 \text{for an arbitrary diffemorphism }\;
  \eta:\overline{D} \to \overline{D}.
\]
Hence, if we consider the Plateau problem with respect to 
the area functional $\A$, we would have to deal with the space of all
diffeomorphisms on $\overline{\Omega}$.  This was the main reason
why the problem was so difficult to solve.

Later, the existence proof was significantly simplified by Courant. 
Courant pointed out that a map $\varphi:D \to \R^d$ is a minimal surface
if and only if it is a stationary point of the Dirichlet integral
\eqref{dirichlet_int} in $X_\Gamma$.
Note that if a map $\varphi:\overline{D} \to \R^d$ is stationary in
$X_\Gamma$, then it satisfies the following equations:
\begin{align*}
  \Delta\varphi = \left(\Delta
 \varphi_1,\cdots,\Delta\varphi_d\right)^\top = 0 \quad \text{ and }
 \left|\varphi_u\right|^2 = \left|\varphi_v\right|^2, \;
 \varphi_u\cdot\varphi_v = 0 \quad \text{ in } D.
\end{align*}
The second and third equations mean that $\varphi$ is 
\emph{isothermal}, or \emph{conformal}.

Courant showed the following theorem
\cite[Chapter~3]{Courant}, \cite[Main~Theorem, p.270]{DHS}:
\begin{theorem}
There exists a map $\varphi \in X_\Gamma$ that attains the infimum
of the Dirichlet integral in $X_\Gamma$, that is,
\begin{equation}
     \D(\varphi) = \inf_{\psi \in X_\Gamma} \D(\psi).
   \label{Douglas-Rado}
\end{equation}
This $\varphi$ is a solution to the Plateau problem.
\end{theorem}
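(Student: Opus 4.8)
The plan is to run the direct method of the calculus of variations, the one genuine subtlety being that $X_\Gamma$ is \emph{not} weakly closed: since $\D(\psi)=\D(\psi\circ\eta)$ for every conformal automorphism $\eta$ of $\overline D$, a minimizing sequence can degenerate under the non-compact action of the M\"obius group, with the boundary parametrizations concentrating near a single point of $\partial D$. To forbid this I would impose a \emph{three-point condition}: fix $p_1,p_2,p_3\in\partial D$ and $q_1,q_2,q_3\in\Gamma$ in matching cyclic order, and, using that the M\"obius group acts transitively on positively oriented boundary triples together with the conformal invariance of $\D$, replace any minimizing sequence by one satisfying $\psi_n(p_k)=q_k$ for $k=1,2,3$. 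So I take $\psi_n\in X_\Gamma$ with this normalization and $\D(\psi_n)\to e:=\inf_{\psi\in X_\Gamma}\D(\psi)$; since replacing each $\psi_n$ by the harmonic extension of its boundary trace only decreases $\D$, I may further assume each $\psi_n$ is harmonic in $D$.

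The heart of the matter is to prove that the boundary traces $\{\psi_n|_{\partial D}\}$ are equicontinuous, via the Courant--Lebesgue lemma. For $p\in\partial D$ and small $\delta>0$, writing $\int_{\{\delta<|x-p|<\sqrt\delta\}\cap D}|\nabla\psi_n|^2\,\dd x$ in polar coordinates centred at $p$ and applying the mean-value theorem produces a radius $\rho=\rho(n)\in(\delta,\sqrt\delta)$ for which the oscillation of $\psi_n$ along the circular crosscut $C_\rho=\{|x-p|=\rho\}\cap\overline D$ is at most $C\bigl(\D(\psi_n)/\log(1/\delta)\bigr)^{1/2}$, a bound uniform in $n$. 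Since $C_\rho$ is a crosscut of $D$ with both endpoints on $\partial D$ and $\psi_n|_{\partial D}$ is monotone, the image under $\psi_n$ of one of the two boundary subarcs cut off by $C_\rho$ has small diameter; the three-point condition forces this to be the short arc near $p$ once $\delta$ is small enough, giving $\operatorname{diam}\psi_n(\text{arc near }p)\le\omega(\delta)\to0$ with $\omega$ independent of $n$. This is exactly equicontinuity, and since $\psi_n(\partial D)=\Gamma$ is compact, Arzel\`a--Ascoli yields a subsequence with $\psi_n|_{\partial D}\to g$ uniformly, where $g:\partial D\to\Gamma$ is weakly monotone and, again by the three-point condition, onto --- hence a legitimate boundary parametrization for $X_\Gamma$.

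It then remains to let $\varphi$ be the harmonic (Poisson) extension of $g$ to $D$. The Poisson integral of a continuous boundary function is continuous on $\overline D$, so $\varphi\in C(\overline D;\R^d)$ with $\varphi|_{\partial D}=g$, whence $\varphi(\partial D)=\Gamma$ and $\varphi|_{\partial D}$ is monotone. Because each (harmonic) $\psi_n$ is the Poisson extension of $\psi_n|_{\partial D}$ and these converge uniformly to $g$, the maximum principle gives $\psi_n\to\varphi$ uniformly on $\overline D$; extracting a weak $H^1$ limit, identifying it with $\varphi$, and invoking weak lower semicontinuity of $\D$ yields $\D(\varphi)\le\liminf_n\D(\psi_n)=e$, while $\varphi\in X_\Gamma$ forces $\D(\varphi)\ge e$, so \eqref{Douglas-Rado} holds with equality. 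That $\varphi$ solves the Plateau problem then follows from the variational principle recalled above: being a minimizer, $\varphi$ is stationary for $\D$ in $X_\Gamma$, so outer variations $\varphi+t\zeta$ with $\zeta\in C_0^\infty(D;\R^d)$ give $\Delta\varphi=0$ and inner variations $\varphi\circ(\mathrm{id}+t\xi)$ with $\xi$ a compactly supported vector field on $D$ give $|\varphi_u|^2=|\varphi_v|^2$ and $\varphi_u\cdot\varphi_v=0$, which characterize $\varphi$ as a minimal surface.

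The step I expect to be the genuine obstacle is the equicontinuity estimate --- the Courant--Lebesgue lemma married to the three-point normalization --- since it is there, and essentially nowhere else, that the non-compactness of the conformal group has to be defeated so that the limiting map actually spans $\Gamma$ rather than collapsing an arc of it; the harmonic extension, the semicontinuity argument, and the Euler--Lagrange computations are comparatively routine.
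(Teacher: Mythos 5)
Your existence argument follows exactly the classical route of the references the paper cites for this theorem (Courant; Dierkes--Hildebrandt--Sawada): normalize a minimizing sequence by the three-point condition using conformal invariance, replace by harmonic extensions, get equicontinuity of the boundary traces from the Courant--Lebesgue lemma combined with monotonicity and the three-point condition, pass to a uniform limit of the traces by Arzel\`a--Ascoli, take the Poisson extension, and conclude by lower semicontinuity. That part is sound, up to one hypothesis you should make explicit: the argument needs $\inf_{\psi\in X_\Gamma}\D(\psi)<\infty$, i.e.\ that $\Gamma$ bounds at least one disk-type map of finite Dirichlet integral (e.g.\ $\Gamma$ rectifiable), which is the standing assumption in the cited sources.

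The genuine gap is in your last step, the claim that inner variations $\varphi\circ(\mathrm{id}+t\xi)$ with \emph{compactly supported} $\xi$ yield $|\varphi_u|^2=|\varphi_v|^2$ and $\varphi_u\cdot\varphi_v=0$. They cannot. The first inner variation along such a field equals, up to a constant, $\operatorname{Re}\int_D \Phi\,\partial_{\bar z}\xi\,\dd x$, where $\Phi=|\varphi_u|^2-|\varphi_v|^2-2i\,\varphi_u\cdot\varphi_v$ is the Hopf differential; its vanishing for all compactly supported $\xi$ says only that $\Phi$ is holomorphic in $D$. But $\Phi$ is holomorphic for \emph{every} harmonic map, conformal or not, so for your (already harmonic) $\varphi$ this stationarity condition is automatically satisfied and carries no information: it cannot distinguish the Douglas--Rad\'o minimizer from the harmonic extension of an arbitrary monotone parametrization of $\Gamma$, which in general is not conformal. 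The classical proof uses inner variations generated by vector fields tangent to $\partial D$ along $\partial D$ --- i.e.\ variations of the boundary parametrization, admissible in $X_\Gamma$ because membership constrains only the boundary values to be a weakly monotone parametrization of $\Gamma$ --- and from the resulting natural boundary condition (essentially that $z^2\Phi(z)$ is real on $\partial D$), together with holomorphy, $\Phi\in L^1(D)$, and a reflection argument, concludes $\Phi\equiv 0$. Either carry out that boundary-variation argument or simply cite the variational principle stated in Section~2 as known; as written, your derivation of conformality, and hence of the assertion that the minimizer solves the Plateau problem, does not go through.
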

The map which satisfies \eqref{Douglas-Rado} is called
the \emph{Douglas-Rad\'o solution}.

Note that, for any $\psi \in X_\Gamma$, we have
\[
   \D(\psi) = \D(\psi\circ\eta) \quad
 \text{for an arbitrary conformal map }\;
  \eta:\overline{D} \to \overline{D}.
\]
Hence, if we consider the Plateau problem with respect to 
the Dirichlet integral $\D$, we only need to deal with the space of
all conformal maps on $\overline{D}$. A conformal map
$\eta:\overline{D} \to \overline{D}$ is determined uniquely by a 
\emph{normalization condition}, which can be one of the following conditions:
\begin{itemize}
 \item Assigning the image of three points $p_i  \in \partial D$,
  $i = 1,2, 3$.
 \item Assigning the image of one inner point 
   $p_0 \in D$ and one boundary point $p_1 \in \partial D$.
 \item Assigning the image of one inner point $p_0 \in D$ and the direction
       of the derivative at $p_0$.
\end{itemize}
Thus, if we use one of these normalization conditions,
a surface $\varphi \in X_\Gamma$ is (locally) \emph{``fixed''}. 
This was the reason why Courant could simplify the Plateau problem so much.

In this paper, we deal with the first of these conditions, which can be applied to any dimension higher than two. 
For the sake of precision and notation, let us write it in more details, this time applied to our problem: \\
Take different points $p_1, p_2, p_3 \in \partial D$
 and $q_1, q_2, q_3 \in \Gamma$.  Then, impose
  $\varphi(p_i) = q_i ,\, i=1,2,3$. 
Figure~\ref{fig:ncond} illustrates this normalization condition.

\begin{figure}
\centering
	\includegraphics[width=0.75\textwidth]{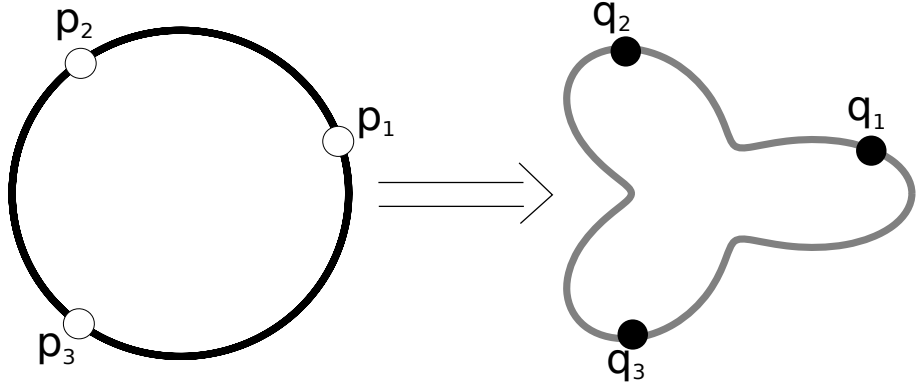}
	\caption{As a normalization condition, we fix the images of three points on the  boundary $(d \ge 2)$}
\label{fig:ncond}
\end{figure}

Let us summarize notation of function spaces we use in this paper.
The set of all continuous functions on $\overline{D}$ with the uniform
convergence metric is denoted by $C(\overline{D})$.
The spaces $L^2(D)$ and $H^1(D)$ are defined by 
\begin{align*}
 L^2(D) := \left\{f:D\to\R \,\middle|\,
 \int_{D}{}{|f|^2}{x}<\infty\right\}, \\
 H^1(D) := \left\{f\in L^2(D) \,\middle|\, f_u, f_v \in
 L^2(D)\right\}.
\end{align*}
Their norms are also defined as usual. 
The set of maps $\varphi:D \to \R^d$ whose components belong, for
example, to $L^2(D)$ is denoted by $L^2(D;\R^d)$.
The sets $C(\overline{D};\R^d)$ and $H^1(D;\R^d)$ are defined similarly.

\FloatBarrier

\section{Finite element approximation}
\label{sec:fea}
In this section, we detail how to compute a FE approximation
of the solution to the problem detailed in Section~\ref{sec:minsurf}. We
present a general method, highlight two problems that easily occur and
propose a solution.

\subsection{Definitions}
Let $\T$ be a \emph{face-to-face} triangulation of the unit disk $D$.
This means that $\T$ is a set of triangles (which are regarded as
closed sets) such that
\begin{itemize}
 \item $\displaystyle D_h := \bigcup_{K \in \T}K \subset \overline{D}$.
 \item If $K_1$, $K_2 \in \T$ with $K_1 \neq K_2$, then either
$K_1 \cap K_2 = \emptyset$ or $K_1 \cap K_2$ is a common vertex
or a common edge of $K_1$ and $K_2$.
\item $\partial D_h$ is a piecewise linear inscribed curve of 
$\partial D$.
\end{itemize}
For a triangulation $\T$, we define its fineness by
$|\T| := \max_{K \in \T} \operatorname{diam} K$. 

Let $\mathcal{P}^1$ be the set of all polynomials with two variables
of degree at most~$1$.  We introduce the set of piecewise linear
functions on $\T$ as
\begin{align*}
   S_h := \left\{v_h \in C(D_h) \bigm| v_h|_K \in \mathcal{P}^1,
  \forall K \in \T \right\}.
\end{align*}
Note that each $v_h \in S_h $ is defined only on
$D_h \subset \overline{D}$. 
We extend each function $v_h \in S_h$ to $D\backslash D_h$ by the way described in \cite{Tsuchiya1}. 
That yields the inequalities
\begin{equation}
  \|v_h\|_{H^1(D_h)} \le \|v_h\|_{H^1(D)} \le 
  (1 + Ch)\|v_h\|_{H^1(D_h)}.
\label{eq:ineq}
\end{equation}

Let $\mathcal{N}_{bdy}$ be the set of all nodes on $\partial D$.
We define the discretization of $X_\Gamma$ by 
\begin{align*}
 X_{\Gamma_h} :=\Bigl\{\psi_h \in (S_h)^d
  \Bigm| \psi_h(\mathcal{N}_{bdy}) \subset \Gamma, \;
   \psi_h|_{\partial D} \text{ is } d\text{-monotone} \Bigr\},
\end{align*}
where $\psi_h \in (S_h)^d$ being $d$-monotone means that
if $\partial D$ is traversed once in the positive direction, then $\Gamma$ is traversed once in a given direction, although we allow arcs of $\partial D$ to be mapped onto single points of $\Gamma$.

The triangulations we use for the different mappings of this paper are shown in Figure~\ref{fig:disk}. 
This figure also gives which points of the triangulation are chosen to play the role of 
the points used as a normalization condition. 
For simplicity, we shall refer to these points as \emph{fixed points}.
Triangulation~1 has 169 interior nodes and 48 boundary nodes. Triangulation~2 has 331 interior nodes 
and 66 boundary nodes. Except for one exception in Figure~\ref{fig:3d}, 
where we use $p_1$, $p'_2$, and $p'_3$, we always use as 
fixed points $p_1$, $p_2$, and $p_3$ such as indicated on the figures.
\begin{figure}
	\centering
	\subcaptionbox{Triangulation 1\label{fig:disk1}}
		{\includegraphics[width=0.4\textwidth]{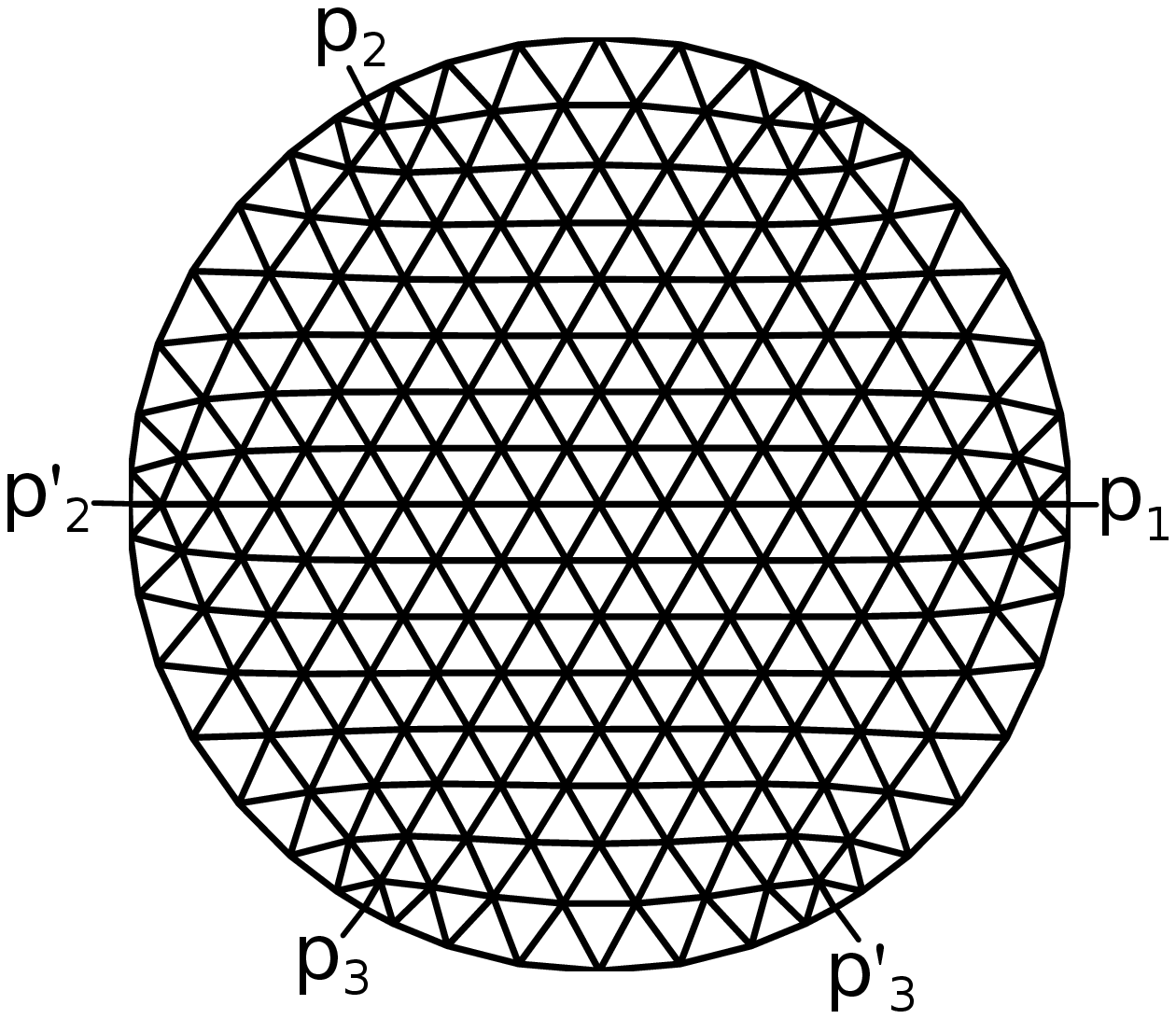}\quad}
	\subcaptionbox{Triangulation 2\label{fig:disk2}}
	{\quad\includegraphics[width=0.4\textwidth]{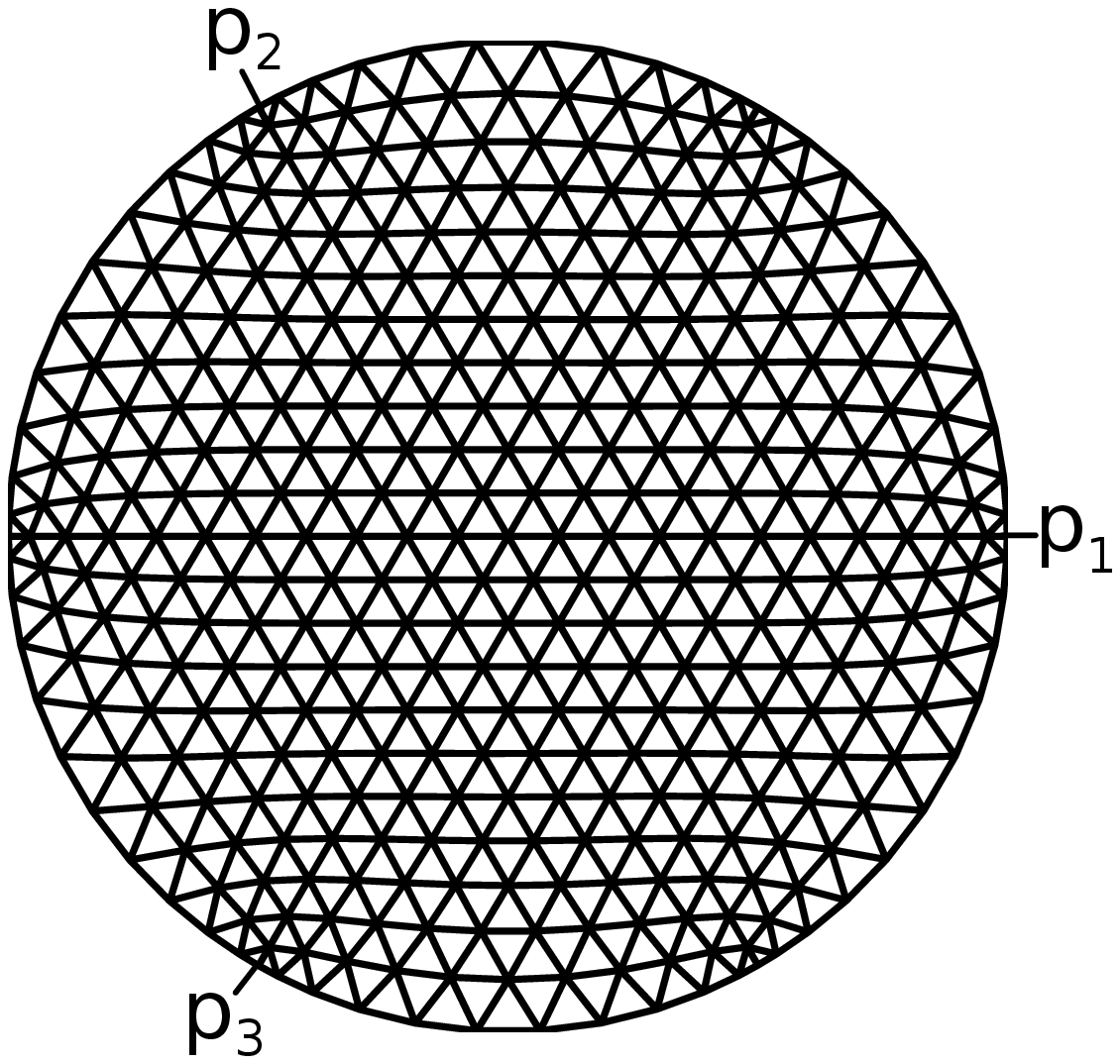}}
\caption{The two triangulations used in this paper}
\label{fig:disk}
\end{figure}

\begin{definition}
  A stationary point $\varphi_h  \in X_{\Gamma_h}$ of $\D$ is called
a \textbf{FE minimal surface} spanned in $\Gamma$.
In particular, a minimal surface $\varphi_h \in X_{\Gamma_h}$
such that
\begin{equation*}
    \D(\varphi_h) = \inf_{\psi_h \in X_{\Gamma_h}} \D(\psi_h)
\end{equation*}
is called \textbf{FE Douglas-Rad\'o solution}.
\end{definition}
From the definitions it is obvious that such solutions exist \cite[Section~5]{Tsuchiya2}.

To apply the three points condition, we take $p_i \in \partial D$ and
$q_i \in \Gamma$, $i=1,2,3$, and fix them.  In this paper, we always
assume that $p_i \in \mathcal{N}_{bdy}$.   We then define
$X_{\Gamma}^{tp}$ and $X_{\Gamma_h}^{tp}$ by
\begin{align*}
 X_{\Gamma}^{tp} & := \Bigl\{\psi\in X_{\Gamma}
  \Bigm| \psi(p_i) = q_i, \, i= 1,2,3\Bigr\}, \\
 X_{\Gamma_h}^{tp} & := \Bigl\{\psi\in X_{\Gamma_h}
  \Bigm| \psi(p_i) = q_i, \, i= 1,2,3\Bigr\}.
\end{align*}
The suffix ``$tp$'' stands for the \emph{three points condition}.

\subsection{Convergence}
For the convergence of FE minimal surfaces, the following
theorems have been known \cite{Tsuchiya2}, \cite{Tsuchiya3},
\cite{Tsuchiya4}.
Let $\{\T_h\}_{h>0}$ be a sequence of regular and quasi-uniform
triangulations \cite{Ciarlet} of $D$ such that $\lim_{h \to 0} |\T_h| = 0$, and
$p_i$, $i=1,2,3$ are nodal points for any $\T_h$.

\begin{theorem}\label{thm:conv}
Suppose that $\Gamma$ is rectifiable.
Let $\{\varphi_h\}_{h >0}$ be the sequence of FE Douglas-Rad\'o
solutions on $\T_h$.  Then, there exists a subsequence
$\{\varphi_{h_i}\} \subset X_{\Gamma_{h_i}}^{tp}$ which converges to one
of the Douglas-Rad\'o solution $\varphi \in X_{\Gamma}^{tp}$ spanned in
$\Gamma$ in the following sense$:$
\begin{equation}
    \lim_{h_i \to 0} \|\varphi_{h_i} - \varphi\|_{H^1(D;\R^d)}
     = 0,
   \label{convergence1}
\end{equation}
and if $\varphi \in W^{1,p}(D;\R^d)$, $p > 2$, then
\begin{equation}
    \lim_{h_i \to 0} \|\varphi_{h_i} - \varphi\|_{C(\overline{D};\R^d)}
     = 0.
   \label{convergence2}
\end{equation}
\end{theorem}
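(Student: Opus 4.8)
The plan is to carry out the classical compactness argument for minimising sequences, with finite element competitors in place of continuous ones and the inequality \eqref{eq:ineq} used throughout to pass between $D_h$ and $D$. Put $d(\Gamma):=\inf_{\psi\in X_\Gamma}\D(\psi)$; this is finite because $\Gamma$ is rectifiable, and the three points condition does not lower it, since any $\psi\in X_\Gamma$ may be precomposed with a M\"obius automorphism of $\overline D$ carrying the $p_i$ to preimages of the $q_i$ without changing $\D$, so that $\inf_{\psi\in X_\Gamma^{tp}}\D(\psi)=d(\Gamma)$. First I would fix a Douglas--Rad\'o solution $\varphi^\star\in X_\Gamma^{tp}$ from \eqref{Douglas-Rado}; since for rectifiable $\Gamma$ one only knows $\varphi^\star\in H^1(D;\R^d)\cap C(\overline D;\R^d)$, I would mollify and reparametrise it to a smoother $\tilde\varphi\in X_\Gamma^{tp}$ with $\D(\tilde\varphi)\le d(\Gamma)+\varepsilon$, take its nodal interpolant $\psi_h$, which lies in $X_{\Gamma_h}^{tp}$ because the $p_i$ are nodes and the nodal samples of a monotone boundary map are visited in order along $\Gamma$, and use standard interpolation estimates with \eqref{eq:ineq} to get $\limsup_{h\to0}\D(\psi_h)\le\D(\tilde\varphi)$. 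A diagonal argument in $\varepsilon$ together with the minimality $\D(\varphi_h)\le\D(\psi_h)$ of the FE Douglas--Rad\'o solution then yields $\limsup_{h\to0}\D(\varphi_h)\le d(\Gamma)$; in particular $\{\D(\varphi_h)\}$ is bounded.

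Next I would extract a limit. Combining this energy bound with the fixed values $\varphi_h(p_i)=q_i$ and a Poincar\'e inequality on $D_h$ bounds $\|\varphi_h\|_{H^1(D_h;\R^d)}$, hence, by \eqref{eq:ineq}, $\|\varphi_h\|_{H^1(D;\R^d)}$, uniformly in $h$; weak compactness in $H^1$ and the Rellich theorem give a subsequence with $\varphi_{h_i}\rightharpoonup\varphi$ in $H^1(D;\R^d)$ and $\varphi_{h_i}\to\varphi$ in $L^2(D;\R^d)$. The key point is then to show $\varphi\in X_\Gamma^{tp}$: here the Courant--Lebesgue lemma, applied uniformly along the sequence thanks to the energy bound, produces a common modulus of continuity for the traces $\varphi_{h_i}|_{\partial D}$, so that after a further subsequence these converge uniformly on $\partial D$ to a continuous, weakly monotone map which sweeps $\Gamma$ exactly once and satisfies $\varphi(p_i)=q_i$; identifying this boundary limit with the trace of $\varphi$ gives $\varphi\in X_\Gamma^{tp}$. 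Weak lower semicontinuity of $\D$ then gives $\D(\varphi)\le\liminf_i\D(\varphi_{h_i})\le d(\Gamma)$, while $\D(\varphi)\ge d(\Gamma)$ since $\varphi\in X_\Gamma^{tp}$; hence $\D(\varphi)=d(\Gamma)$, so $\varphi$ is a Douglas--Rad\'o solution and moreover $\D(\varphi_{h_i})\to\D(\varphi)$.

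Strong convergence \eqref{convergence1} now follows from a Hilbert-space argument: since $2\D$ is the squared $H^1$-seminorm, convergence of energies together with the weak convergence $\nabla\varphi_{h_i}\rightharpoonup\nabla\varphi$ in $L^2$ forces $\nabla\varphi_{h_i}\to\nabla\varphi$ strongly in $L^2$, which combined with the strong $L^2$ convergence above and one further application of \eqref{eq:ineq} yields \eqref{convergence1}. For \eqref{convergence2}, assume $\varphi\in W^{1,p}(D;\R^d)$ with $p>2$, so $\varphi$ is H\"older continuous and $\|\varphi-I_{h_i}\varphi\|_{C(\overline D;\R^d)}\to0$ for the nodal interpolant $I_{h_i}\varphi$; writing $\varphi_{h_i}-\varphi=(\varphi_{h_i}-I_{h_i}\varphi)+(I_{h_i}\varphi-\varphi)$, I would control the discrete part with the two-dimensional inverse inequality $\|v_h\|_{C(\overline D)}\le C(1+|\log h|)^{1/2}\|v_h\|_{H^1(D)}$ together with a convergence rate for $\|\varphi_{h_i}-I_{h_i}\varphi\|_{H^1}$ obtained from the $W^{1,p}$-regularity of $\varphi$ and the previous step, giving \eqref{convergence2} once the two rates are seen to be compatible.

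I expect the admissibility step to be the main obstacle: one must verify that weak $H^1$ limits of the discrete minimisers remain in the admissible class, and in particular that monotonicity and the ``swept exactly once'' property are not destroyed in the limit by arcs of $\partial D$ collapsing onto the wrong parts of $\Gamma$ --- this is exactly where the rectifiability of $\Gamma$ and the Courant--Lebesgue lemma are indispensable, and where the discrepancy $D_h\subsetneq\overline D$ has to be handled with care. Two secondary difficulties are arranging the comparison function of the first step to satisfy the nodal, $d$-monotonicity and three points constraints simultaneously, and checking, in the last step, that the finite element interpolation rate for $W^{1,p}$ data outweighs the logarithmic factor in the inverse inequality, which is precisely why the additional $W^{1,p}$ hypothesis is assumed for \eqref{convergence2}.
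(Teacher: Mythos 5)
You should first note that the paper does not actually prove this theorem; it cites \cite{Tsuchiya2,Tsuchiya3,Tsuchiya4}, and the closest in-paper template is the proof of Theorem~\ref{Thm-pfreeB}. Your overall skeleton (energy comparison with a discrete competitor, weak $H^1$ compactness, boundary equicontinuity via Courant--Lebesgue plus the three points condition, lower semicontinuity, and strong convergence from convergence of energies) is the right shape, but two steps have genuine gaps. The first is the competitor construction: ``mollify and reparametrise'' a Douglas--Rad\'o solution does not produce an element of $X_\Gamma^{tp}$ --- smoothing moves the boundary values off $\Gamma$ and destroys monotonicity onto $\Gamma$, and no reparametrisation of $\overline D$ repairs that; moreover the nodal interpolant is not controlled in $H^1$ for data that are merely $C(\overline D)\cap H^1$. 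The route used in the paper (and in \cite{Tsuchiya2,Tsuchiya3,Tsuchiya4}) avoids this entirely: take $\eta_h\in (S_h)^d$ discrete harmonic with $\eta_h=\varphi$ at all boundary nodes, which lies in $X_{\Gamma_h}^{tp}$ by construction, and prove $\lim_{h\to0}\|\varphi-\eta_h\|_{H^1(D)}=0$ by modifying \cite[Theorem~3.2.3]{Ciarlet} with the inequalities \eqref{eq:ineq}, exploiting that $\varphi$ is harmonic. A related flaw is your uniform $H^1$ bound: a Poincar\'e inequality with three point constraints is false in two dimensions (points have zero $H^1$-capacity, so the discrete constant blows up as $h\to0$); the bound on $\|\varphi_h\|_{L^2}$ comes instead from the discrete maximum principle, using that $\varphi_h$ is discrete harmonic and its boundary nodal images lie on the bounded set $\Gamma$.

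The second genuine gap is the proof of \eqref{convergence2}. Your plan needs a convergence rate for $\|\varphi_{h_i}-I_{h_i}\varphi\|_{H^1}$ strong enough to beat the factor $(1+|\log h|)^{1/2}$, but no such rate is available: $\varphi_{h_i}$ is not a Galerkin projection of $\varphi$ (the admissible set is neither linear nor convex, so no C\'ea-type lemma applies), and the compactness argument only yields $\|\varphi_{h_i}-\varphi\|_{H^1}\to0$ along a subsequence with no rate, so the step is circular. The established route, flagged in Remark~1 of the paper, is Schatz's weak discrete maximum principle \cite{Schatz} for discrete harmonic maps on regular, quasi-uniform triangulations: the boundary values $\varphi_{h_i}|_{\partial D}$ converge uniformly (Courant--Lebesgue plus monotonicity and the three points condition, with $W^{1,p}$, $p>2$, giving continuity of $\varphi$ up to $\overline D$), and this uniform boundary error is transferred to the interior by comparing $\varphi_{h_i}$ with the discrete harmonic extension $\eta_{h_i}$, whose own uniform convergence to $\varphi$ again uses the $W^{1,p}$ regularity. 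With those two repairs, the remaining parts of your outline (admissibility of the weak limit, $\D(\psi)\le\liminf\D(\varphi_{h_i})\le\D(\varphi)$, and strong $H^1$ convergence from weak convergence plus convergence of the Dirichlet energies) match the intended argument.
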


A minimal surface $\varphi \in X_\Gamma^{tp}$ is said to be
\emph{isolated and stable} if there exists a constant $\delta$
such that
\[
   0 < \|\varphi - \psi\|_{C(\overline{D};\R^d)} < \delta
  \quad \text{ implies } \quad
  \D(\varphi) < \D(\psi) \quad \text{ for }
   \psi \in X_\Gamma^{tp}.
\]

\begin{theorem}\label{thm:conv2}
Suppose that $\Gamma$ is rectifiable.
Let $\varphi \in X_\Gamma^{tp}$ be an isolated and stable minimal
surface spanned in $\Gamma$.  Then, there exists a sequence
$\{\varphi_h\}_{h > 0}$ of stable FE minimal surfaces
which converges to $\varphi$ in the sense of 
\eqref{convergence1} and \eqref{convergence2}. 
\end{theorem}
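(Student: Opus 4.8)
The plan is to obtain each $\varphi_h$ as a minimizer of $\D$ over a small closed uniform-norm ball around $\varphi$ inside $X_{\Gamma_h}^{tp}$ --- in fact inside the subset of $X_{\Gamma_h}^{tp}$ whose members are discrete harmonic at the interior nodes, which is where every FE minimal surface lives and what the relaxation procedure computes --- and then to show that, for $h$ small, this minimizer lies in the \emph{open} ball. An interior minimizer of that restricted problem is an unconstrained local minimizer, hence a stationary point, of $\D$ on $X_{\Gamma_h}^{tp}$, i.e.\ a stable FE minimal surface; and the interior bound will at the same time force $\varphi_h \to \varphi$. Rectifiability of $\Gamma$ enters exactly where it does in Courant's existence proof and in \cite{Tsuchiya2,Tsuchiya3}.

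Fix $\delta' \in (0,\delta)$ with $\delta$ as in the definition of an isolated and stable minimal surface, and let $K_h$ consist of those $\psi_h \in X_{\Gamma_h}^{tp}$ that are discrete harmonic at the interior nodes and satisfy $\|\psi_h - \varphi\|_{C(\overline D;\R^d)} \le \delta'$. Discrete harmonicity is a linear condition, so $K_h$ is a closed, bounded subset of the finite-dimensional space $(S_h)^d$ --- the three-point condition, $\psi_h(\mathcal{N}_{bdy}) \subset \Gamma$, and $d$-monotonicity also persist under uniform limits within a fixed triangulation --- hence compact; since $\D$ is continuous, a minimizer $\varphi_h \in K_h$ exists once $K_h \neq \emptyset$. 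Non-emptiness and an upper energy bound come from a recovery sequence $\psi_h \in K_h$ with $\psi_h \to \varphi$ in $H^1(D;\R^d)$ and in $C(\overline D;\R^d)$: one takes the discrete harmonic extension of the polygonal trace obtained by shrinking $\varphi$ slightly toward the center of $D$ ($\varphi$ being harmonic, hence $C^\infty$, in $D$, the shrunk map is smooth up to $\partial D$), sampling it at the boundary nodes, and correcting these values back onto $\Gamma$ through an arc-length parametrization so that monotonicity and the three fixed images are preserved --- this is the construction of \cite{Tsuchiya1,Tsuchiya2,Tsuchiya3}, with \eqref{eq:ineq} reconciling the $H^1$ norms on $D_h$ and $D$. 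For $h$ small, $\|\psi_h - \varphi\|_{C(\overline D;\R^d)} < \delta'$, so $\psi_h \in K_h$ and $\D(\varphi_h) \le \D(\psi_h)$, whence $\limsup_{h\to 0}\D(\varphi_h) \le \D(\varphi)$.

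The key ingredient is the compactness behind Courant's proof, in discrete form: a sequence of discrete harmonic maps $w_h \in X_{\Gamma_h}^{tp}$ of uniformly bounded Dirichlet energy is precompact in $C(\overline D;\R^d)$, with every limit point in $X_{\Gamma}^{tp}$ --- the Courant--Lebesgue lemma together with the three-point normalization forces equicontinuity of the boundary traces, the discrete maximum principle carries it to the interior, Arzel\`a--Ascoli applies, and the discrete boundary conditions pass to the limit as the boundary chords shrink. Suppose now, for contradiction, that along a subsequence the ball constraint were active, i.e.\ $\|\varphi_h - \varphi\|_{C(\overline D;\R^d)} = \delta'$. As $\D(\varphi_h) \le \D(\psi_h)$ is bounded, a further subsequence converges in $C(\overline D;\R^d)$ to some $\varphi_\ast \in X_{\Gamma}^{tp}$ with $\|\varphi_\ast - \varphi\|_{C(\overline D;\R^d)} = \delta'$, and passing to an $H^1$-weakly convergent subsequence and using weak lower semicontinuity of $\D$ gives $\D(\varphi_\ast) \le \liminf \D(\varphi_h) \le \D(\varphi)$. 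Since $0 < \delta' < \delta$, isolatedness and stability of $\varphi$ force $\D(\varphi_\ast) > \D(\varphi)$ --- a contradiction. Hence for all small $h$ the constraint is inactive, and $\varphi_h$, being an interior minimizer and discrete harmonic, is a stationary point of $\D$ on $X_{\Gamma_h}^{tp}$, i.e.\ a stable FE minimal surface spanned in $\Gamma$ (any $C$-nearby competitor, replaced by its discrete harmonic part, still lies in the ball and hence has at least the energy of $\varphi_h$). The same compactness gives convergence: every subsequence of $\{\varphi_h\}$ has a sub-subsequence converging in $C(\overline D;\R^d)$ to a map of $X_{\Gamma}^{tp}$ at $C$-distance $\le \delta' < \delta$ from $\varphi$ and with energy $\le \D(\varphi)$, which by isolatedness and stability must be $\varphi$; thus $\varphi_h \to \varphi$ in $C(\overline D;\R^d)$, which is \eqref{convergence2}. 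Then $\D(\varphi_h) \to \D(\varphi)$ (the reverse bound being weak lower semicontinuity along an $H^1$-weakly convergent subsequence), so $\nabla\varphi_h \rightharpoonup \nabla\varphi$ in $L^2$ with $\|\nabla\varphi_h\|_{L^2} \to \|\nabla\varphi\|_{L^2}$, which upgrades to strong convergence of the gradients; with the $C$-convergence and \eqref{eq:ineq} this gives \eqref{convergence1}.

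The main obstacle is the interplay of the uniform-norm ball with the discretization, on two counts. First, the recovery sequence must converge in $C(\overline D;\R^d)$ --- not merely in $H^1$ --- so that $\psi_h$ genuinely belongs to $K_h$; for a merely rectifiable $\Gamma$, the boundary correction that puts the node images back on $\Gamma$ while keeping both the energy and the uniform distance to $\varphi$ under control is delicate. Second, the discrete compactness --- equicontinuity of discrete harmonic maps with equicontinuous boundary traces on regular, quasi-uniform triangulations --- is the discrete Courant--Lebesgue estimate, which carries real content. Both are precisely the technical lemmas already developed in \cite{Tsuchiya2,Tsuchiya3,Tsuchiya4}, which I would invoke rather than reprove.
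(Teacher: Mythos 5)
The paper itself gives no proof of Theorem~\ref{thm:conv2}; it is quoted from \cite{Tsuchiya2,Tsuchiya3,Tsuchiya4} (with only Remark~1 adjusting the mesh hypotheses), and your overall strategy --- minimize $\D$ over the discrete admissible class intersected with a closed $C^0$-ball of radius $\delta'<\delta$ around $\varphi$, show the constraint is eventually inactive, and read off stability and convergence --- is indeed the localization argument used in those references. The genuine gap is the compactness statement you lean on twice: that discrete harmonic maps in $X_{\Gamma_h}^{tp}$ with uniformly bounded Dirichlet energy are precompact in $C(\overline D;\R^d)$, ``the discrete maximum principle carries it to the interior.'' The Courant--Lebesgue lemma does give equicontinuity of the \emph{boundary} traces, but transferring a modulus of continuity from the boundary to the closed disk for \emph{discrete} harmonic maps is not a consequence of Schatz's weak maximum principle, which only yields $\|v_h\|_{L^\infty(D_h)}\le C\|v_h\|_{L^\infty(\partial D_h)}$ with a constant $C$ (possibly $>1$, and in general with logarithmic factors); it provides no equicontinuity near $\partial D$ uniformly in $h$. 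That this uniform control is not free is visible in the paper itself: Theorem~\ref{thm:conv} asserts $C(\overline D;\R^d)$-convergence only under the extra hypothesis $\varphi\in W^{1,p}(D;\R^d)$, $p>2$ (and the proof of Theorem~\ref{Thm-pfreeB} is conditional in the same way), precisely because the uniform comparison of $\varphi_h$ with $\varphi$ goes through an $L^\infty$ finite element estimate for the discrete harmonic extension, which needs that regularity. Consequently your contradiction step --- extracting a subsequence with $\|\varphi_{h}-\varphi\|_{C(\overline D)}=\delta'$ converging \emph{uniformly} to some $\varphi_*$ with $\|\varphi_*-\varphi\|_{C(\overline D)}=\delta'$ --- is not justified by the lemmas you propose to invoke, and your unconditional claim of \eqref{convergence2} overreaches what Theorem~\ref{thm:conv} itself asserts.

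Note also that the weaker information you can legitimately extract (weak $H^1$ and $L^2$ convergence, a.e.\ passage of the bound $\|\varphi_{h}-\varphi\|_{C(\overline D)}\le\delta'$ to the limit, harmonicity of the limit, uniform convergence of the boundary polygons) only places $\varphi_*$ in the \emph{closed} ball with $\D(\varphi_*)\le\D(\varphi)$, hence $\varphi_*=\varphi$ by isolatedness and stability; this does not contradict activity of the constraint, so the inactivity of the $C^0$-ball constraint --- which is exactly what you need to pass from ``minimizer of the restricted problem'' to ``stationary point of $\D$ in $X_{\Gamma_h}^{tp}$'' --- still requires a genuine uniform estimate near the boundary (in the literature this is where the weak maximum principle comparison $\|\varphi_h-\eta_h\|_{L^\infty}\le C\|\varphi_h-\eta_h\|_{L^\infty(\partial D_h)}$ is combined with an $L^\infty$ error bound for the discrete harmonic extension $\eta_h$, hence the regularity hypothesis). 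A smaller point of the same flavour: your parenthetical ``any $C$-nearby competitor, replaced by its discrete harmonic part, still lies in the ball'' also uses the weak maximum principle with its constant $C$, so the competitor radius must be shrunk accordingly; that is easily repaired, but the boundary equicontinuity issue above is the real obstruction and needs either the $W^{1,p}$ hypothesis (as in the paper's own statements) or a discrete barrier argument that neither the paper nor your proposal supplies.
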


Remark~1: Note that in \cite{Tsuchiya2,Tsuchiya3,Tsuchiya4}, Theorems~2 and 3 were proved
under the assumption that the triangulations $\{\T_h\}$ are
regular, quasi-uniform, and \emph{non-negative type}.
As was pointed out in \cite{Tsuchiya4}, however, the weak discrete maximum
principle shown by Schatz \cite{Schatz} holds for discrete harmonic functions
(maps) on regular, quasi-uniform triangulations \cite[Lemma~2.1]{Tsuchiya4}.
Therefore, we here do not need to assume non-negative-typeness of
triangulations to show Theorems~2 and 3.

Dziuk and Hutchinson gave an error analysis of FE
minimal surfaces under certain regularity assumptions on $\Gamma$.
They claim that if $\varphi$ is a ``nondegenerate'' minimal surface
spanned in $\Gamma$, then there exist FE minimal surfaces
$\varphi_h \in X_\Gamma^{tp}$ such that
\[
   \|\varphi - \varphi_h\|_{H^1(D;\R^d)} \le C h,
\]
where $C$ is a constant independent of $h$.  See
\cite{DH1}, \cite{DH2} for the details.

\subsection{Boundary approximation of FE minimal surfaces}
%
Let $\T$ be a face-to-face triangulation of $D$ and $\{(x_i,y_i)\}_{i=1}^N$ be the set of nodes of $\T$, where $N$ is the number of nodes.
Let $\{\eta_i\} \subset S_h$ be the basis of $S_h$ defined by $\eta_i(x_i,y_i) = 1$ and $\eta_i(x_j,y_j) = 0$, for $i \neq j$.
Then, a piecewise linear surface $\psi_h$ is expressed by $\psi_h = \sum_{i=1}^N \eta_i(a_{i,1}, \cdots, a_{i,d})$,
where $(a_{i,1}, \cdots, a_{i,d}) \in \R^d$ is the image of the point $(x_i,y_i)$ by $\psi_h$.

Moreover, its Dirichlet integral $\D(\psi_h)$ is written as
\begin{align}
   \D(\psi_h) = \frac{1}{2} \int_D |\nabla \psi_h|^2 \dd x
  = \frac{1}{2}\left(a_1^\top \widetilde{A}a_1 + \cdots
   + a_d^\top \widetilde{A}a_d\right),
\end{align}
where
\begin{gather*}
   a_k = (a_{1,k}, \cdots, a_{N,k})^\top \in \R^N,\quad
   k = 1, \cdots, d, 
  \\
  \widetilde{A} = \left(\alpha_{ij}\right)_{i,j=1,\cdots,N}, \quad
  \alpha_{ij} := \int_D \nabla \eta_i\cdot \nabla \eta_j \dd x.
\end{gather*}
We would like to find a stationary point $\varphi_h$ of $\D$ in $X_{\Gamma_h}^{tp}$. 
Recall that, if $\psi_h \in X_{\Gamma_h}^{tp}$, then  $\psi_h(\mathcal{N}_{bdy}) \subset \Gamma$.
This means that, if $(x_i,y_i) \in \mathcal{N}_{bdy}$, $\psi_h(x_i,y_i)$ should be on $\Gamma$. 
Suppose that $\Gamma$ is parametrized by a parameter $t \in [0, 2\pi]$ as $\Gamma(t) = (\gamma_1(t), \cdots, \gamma_d(t))$. 
Then, if $\psi_h(x_i, y_i)\in \Gamma$, $\psi_h(x_i, y_i)$ is written as 
\begin{equation}
 \psi_h(x_i, y_i) = (a_{i,1}\eta_i(x_i,y_i), \cdots,  a_{d,i}\eta_i(x_i,y_i)) = (a_{i,1}, \cdots, a_{d,i}) = (\gamma_1(t_i),\cdots,\gamma_d(t_i))
\end{equation}
for some $t_i \in [0, 2\pi]$.

We apply a relaxation procedure to find a stationary point $\varphi_h \in X_{\Gamma_h}^{tp}$. 
That is, to find a stationary point of $\D(\psi_h)$, only one vector $(a_{i,1}, \cdots, a_{i,d})$ is updated at each step. 
To apply the relaxation procedure to $\D(\psi_h)$, we have to consider two cases.

Case 1: $(x_i,y_i)$ is in the interior of $D$.  
In this case, because $\D(\psi_h)$ is a quadratic function with respect to $a_{i,k}$ and 
\[
   \frac{\partial\;}{\partial a_{i,k}}
   \left(\frac{1}{2}a_k^\top A a_k\right)
   = \sum_{j=1}^N \alpha_{ij}a_{j,k}, \quad k = 1, \cdots, d,
\]
we may use, for example, simple Gauss-Seidel iteration 
\[
   a_{i,k}^{(new)} := - \sum_{j=1, j\neq i}^N
   \frac{\alpha_{ij}}{\alpha_{ii}} a_{j,k}^{(old)}, \qquad
  k = 1, \cdots, d.
\]

Case 2: $(x_i,y_i)$ is on the boundary of $D$. 
In this case, the relaxation procedure becomes more complicated. 
We insert (2) into (1), and $\D(\psi_h)$ may be written as
\begin{equation*}
   F(t_i) := \D(\psi_h), \quad t_i \in [0, 2\pi]
\end{equation*}
in the relaxation step at a boundary node. 
We would like to find $t_i$ such that $F'(t_i) = 0$ at \textit{all} boundary nodes $(x_i,y_i)$. To
this end, we employ the Newton method
\begin{align*}
   t_i^{(new)} := t_i^{(old)} - \frac{F'(t_i^{(old)})}{F''(t_i^{(old)})}.
\end{align*}

At first, images of the points on $\partial D$ are distributed with
equal intervals on $\Gamma$  with respect to the positions of the three
fixed points.  In the optimization process, boundary point images
$\varphi_h(\mathcal{N}_{bdy})$ move rather freely on $\Gamma$.
As a result, we might have a FE minimal surface with a poor
approximation of $\Gamma$, if the cardinality of $\mathcal{N}_{bdy}$ is not 
large enough. Figure~\ref{fig:rockbad} illustrates such a situation. 
The parametric equation of the curve represented is 

\[\begin{cases}
	 x = (1+0.5\cos{3\theta})*\cos(\theta),\\
	 y = (1+0.5\cos{3\theta})*\sin(\theta),
\end{cases}
\]
and it looks like the one on the right side of Figure~\ref{fig:ncond}.

\begin{figure}
	\centering
	\subcaptionbox{Poor approximation of some parts of the domain (obtained in 806 iterations)\label{fig:rockbad}}
		{\includegraphics[width=0.4\textwidth]{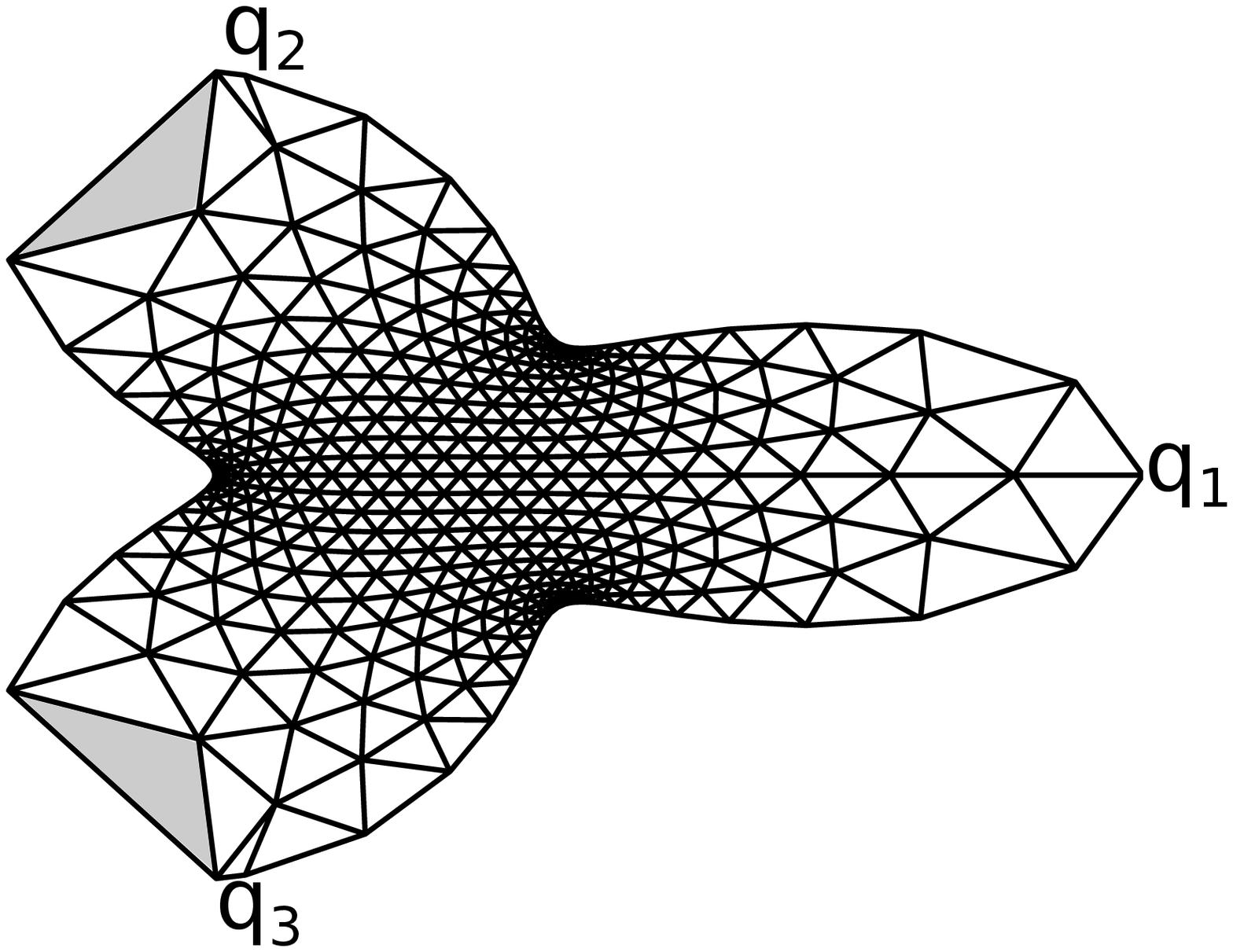}}\quad\quad
	\subcaptionbox{Better approximation obtained by adding four boundary points on defective triangles (obtained in 897 iterations)\label{fig:rockgood}}
	{\quad\includegraphics[width=0.4\textwidth]{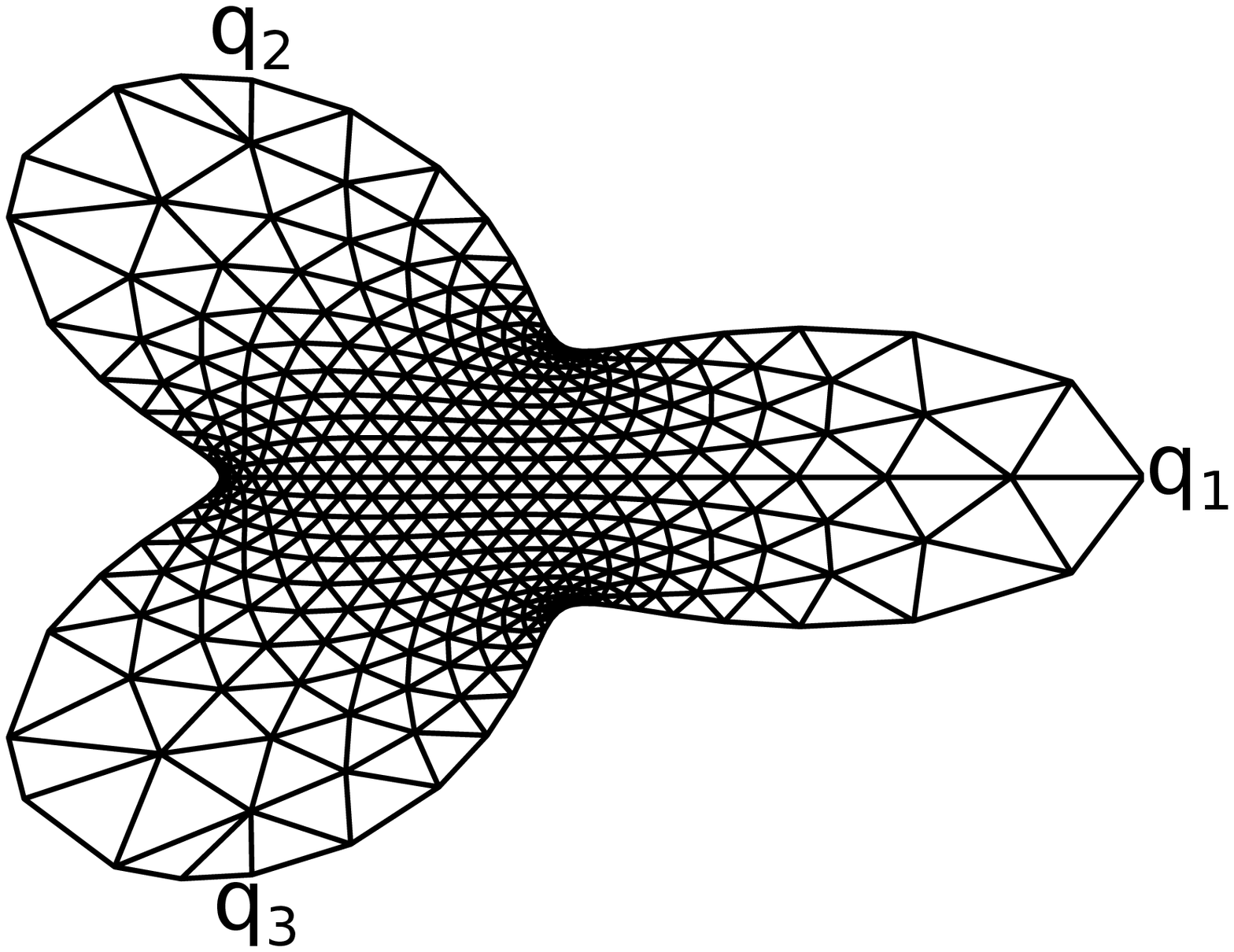}}
\caption{Comparison of two approximations of the same curve}
\label{fig:rocket}
\end{figure}

A simple naive way to obtain a better approximation is
of course to use a finer triangulation of the unit disk so that
$\varphi_h(\mathcal{N}_{bdy})$ provides a better approximation of
$\Gamma$.

Our simple and effective method consists of dynamical insertions of boundary
nodes on triangles whose images are ``defective''. A triangle is called a 
\emph{boundary triangle}, if two of its vertices belong to
$\mathcal{N}_{bdy}$, and the image of a boundary triangle is said to be
\emph{defective} if the distance (or the angle) between these two vertices
is much larger than the one of its neighbour triangles.
Let choose a positive integer $C$. Every $C$ iterations of the relaxation process, 
we check the quality of the boundary. 
When a defective triangle is identified, we split its inverse image into 
two smaller triangles by inserting a new boundary node at the middle of 
its boundary arc (the arc of circle between its two boundary nodes) and joining this node to the vertex facing it, before 
continuing the relaxation process.
Adding few boundary points to defective triangles during the relaxation process,
we can get a better approximation, as shown by Figure~\ref{fig:rockgood}. 

An even more problematic situation can occur, when the approximation completely
collapses, as shown by Figure~\ref{fig:rockcrash}. This case can be viewed as
an extreme case of the previous one. Such a situation would happen if a triangle becomes 
much larger than its neighbourhood, resulting in its collapse. It can therefore be overcome 
by the method described above, as shown by Figure~\ref{fig:rockbisect}. We see a limitation of 
such an adaptive bisection method. The computation is hindered by the insertion of too many points. 
Figure~\ref{fig:rockrest} shows the result of the same adaptive method but this time combined 
with a different refinement technique. A defective boundary triangle is split into four triangles 
by joining the middle points of its three edges. Such a refinement technique is often called 
\emph{regular refinement} and the previous one \emph{marked edge bisection} (see \cite[Section~4.1]{Verfurth}. 
The regular refinement has the advantage to produce triangles that are similar to the one from which they are created. 
On the other hand it involves a more complex and heavier computation as the technique introduces hanging 
nodes --- nodal points in the middle of an edge --- on the neighbours of the refined triangle. 
To preserve the triangulation, a hanging node is joined to the vertex facing it.

\begin{figure}
\centering
\begin{tabular}{cc}
\multirow{2}{*}{\subcaptionbox{The approximation collapses due to the choices of the fixed points and their images\label{fig:rockcrash}}
				{\includegraphics[width=0.4\textwidth]{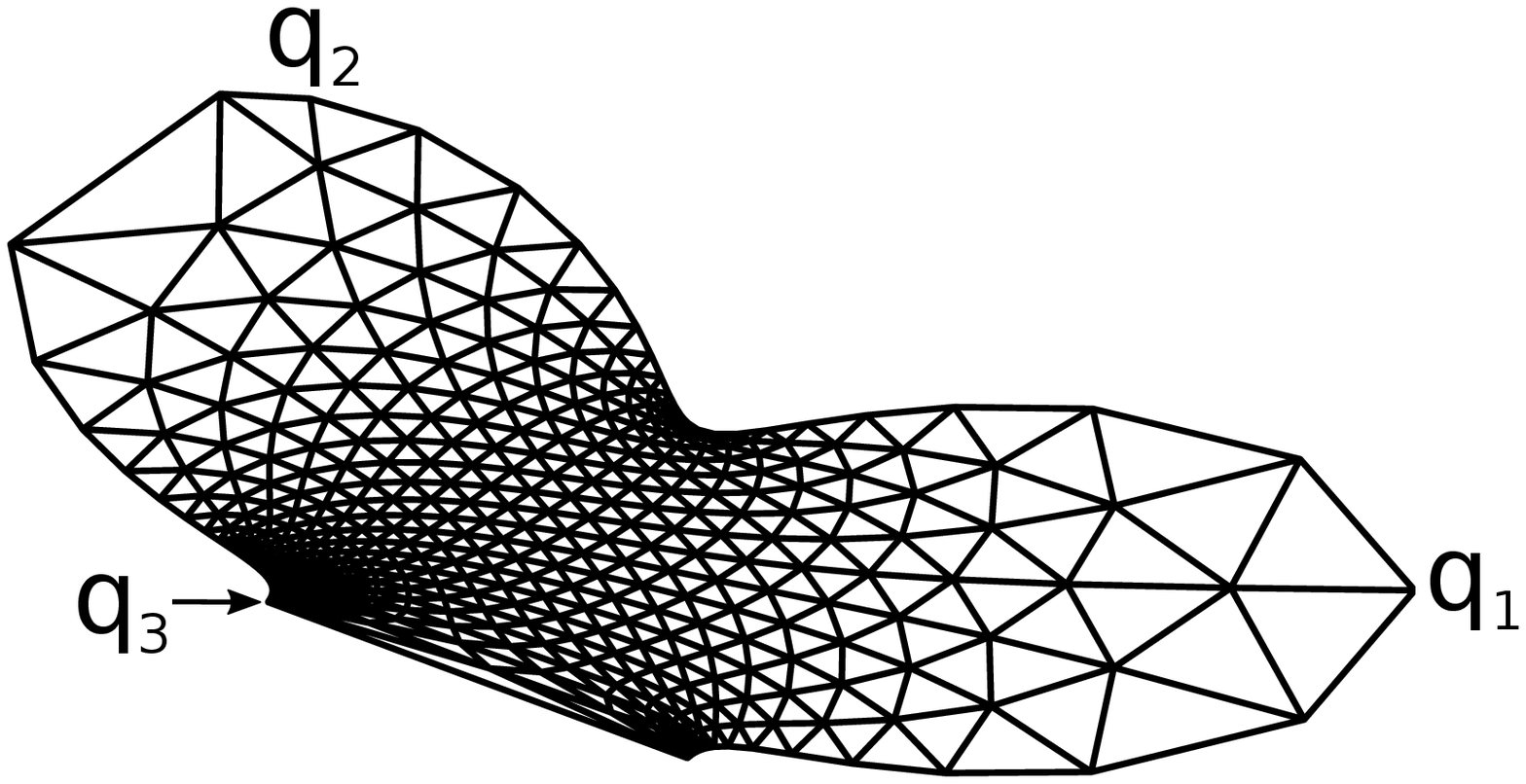}}} \quad \quad&
\makecell[r]{\subcaptionbox{The approximation is restored by bisecting defective triangles\label{fig:rockbisect}}
				{\includegraphics[width=0.4\textwidth]{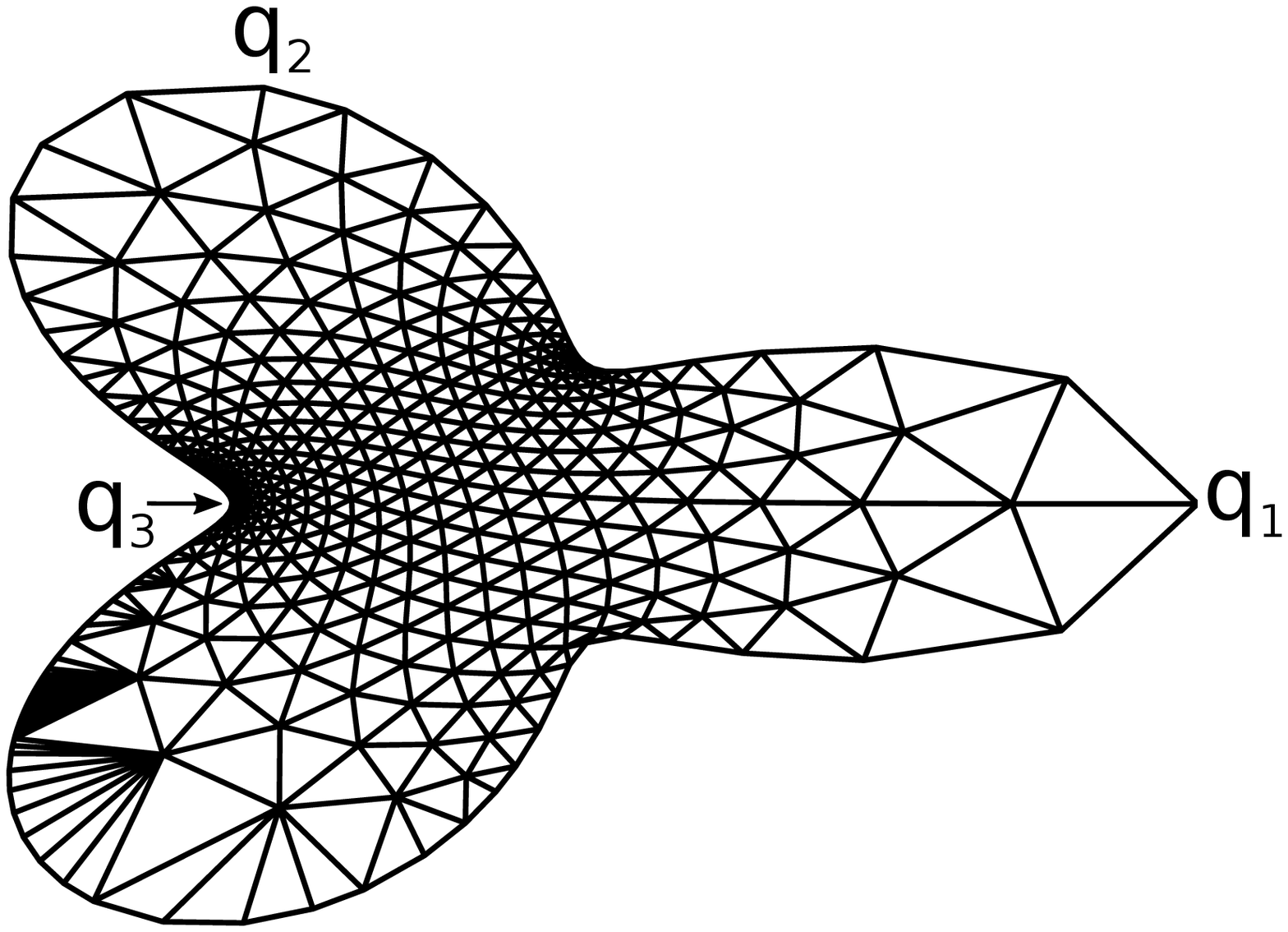}}}\\

&\makecell[r]{\subcaptionbox{The approximation is restored by regularly refining defective triangles\label{fig:rockrest}}
				{\includegraphics[width=0.4\textwidth]{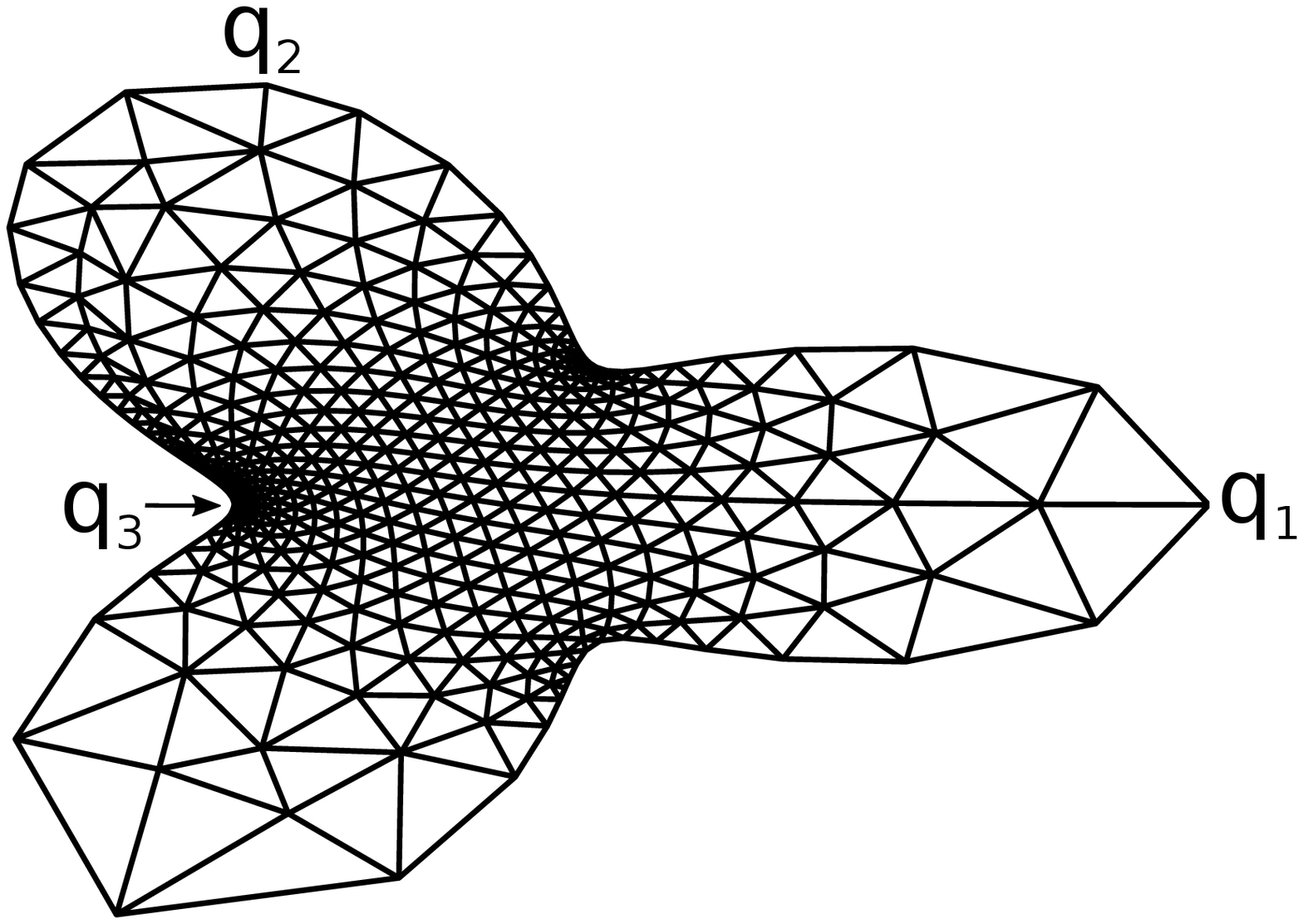}}}
\end{tabular}
\caption{An example of approximation that collapses and how it can be restored}
\label{fig:collapse}
\end{figure}

\subsection{A $3-$dimensional example} We provide another example of how the described method
allows to avoid the collapse of the approximation, this time in three dimensions. 
We use Triangulation~1 to map the unit circle to the curve defined by 
\[\begin{cases}
	x = \cos\theta(1+0.5*\sin(2\theta)),\\
	y = \sin\theta(1+0.5*\sin(2\theta)),\\
	z = 0.5\sin(3\theta).
\end{cases}
\]
We see in Figure~\ref{fig:3dbad} that the original approximation makes the boundary (the grey curve)
impossible to recognize. Figure~\ref{fig:3drest} shows the approximation obtained by 
adaptive regular refinements on defective boundary triangles. We may want to see how 
the approximation behaves if we choose different fixed points. Instead of $p_1$, $p_2$, and $p_3$,
we now choose $p_1$, $p'_2$, and $p'_3$ such as defined in Figure~\ref{fig:disk1}. 
Note that the choice of these points is arbitrary and serves no other purpose than to illustrate 
the fact that collapsed approximations are very common. 
Figures~\ref{fig:3d2bad} shows that, without inserting nodes, we get a 
better approximation than the previous one, but a part of the curve is still 
missing. By the same method, we once again avoid the collapse and get a good 
approximation, as shown by Figure~\ref{fig:3d2rest}. Note that even if we greatly improve the approximation, 
there may sometimes remain some areas that are still poorly approximated such as 
the sharp peak we can see in the figure. This can generally be solved by increasing 
the number of points in the triangulation (Figure~\ref{fig:3dnice} shows 
the result of our method applied to map Triangulation~2) or by locally refining, after the initial mapping, 
the problematic areas to better approximate the surface and its boundary (a posteriori refinement). 

\begin{figure}
	\centering
	\subcaptionbox{The original approximation has collapsed\label{fig:3dbad}}
		{\includegraphics[width=0.4\textwidth]{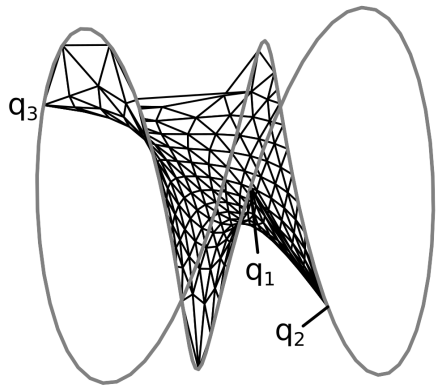}}\quad\quad
	\subcaptionbox{16 insertions to obtain a good approximation\label{fig:3drest}}
	{\includegraphics[width=0.4\textwidth]{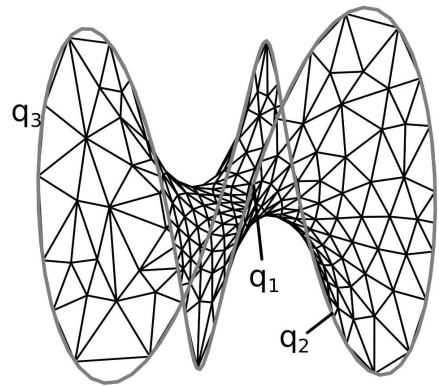}}\\
	
	\subcaptionbox{The same example with different fixed points\label{fig:3d2bad}}
		{\includegraphics[width=0.4\textwidth]{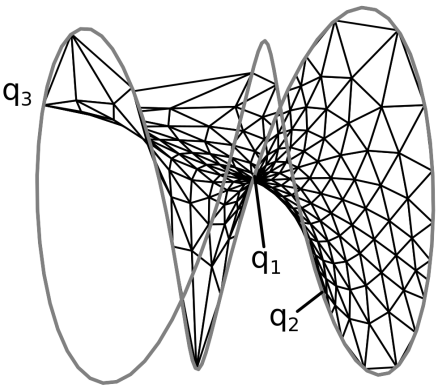}}\quad\quad
	\subcaptionbox{32 insertions to improve the approximation\label{fig:3d2rest}}
	{\includegraphics[width=0.4\textwidth]{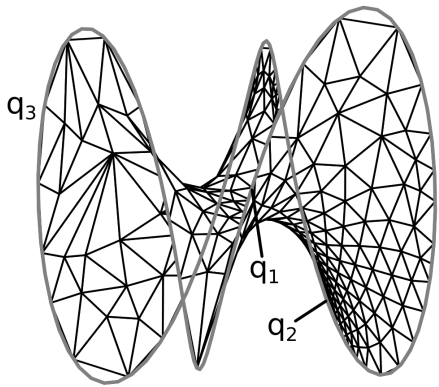}}\\
	
	\subcaptionbox{Increasing the number of nodes in the triangulation\label{fig:3dnice}}
	{\includegraphics[width=0.4\textwidth]{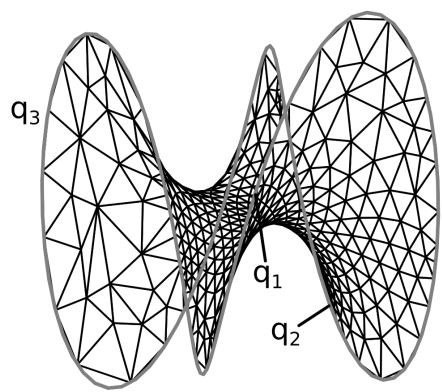}}
\caption{A $3-$dimensional example}
\label{fig:3d}
\end{figure}

\subsection{Approximating corners}
Such an adaptive mesh refinement method is particularly effective to handle curves 
with several corners. Approximations of corners by piecewise elements are often mediocre.
If the curve has less than four corners, these can be chosen as fixed images 
and the approximation can stay unspoiled. However, if the number of corners is higher,
one has to find a way to approximate them better. Let us take the example of the 
square $[-1,1]\times[-1,1]$.
One possibility is to use a smoothing function \cite{Tsuchiya4}, but we show in Figure~\ref{fig:square} 
how the approximation at the corner can be improved by the described method (here, we used bisections). 
The nodes on the edge of the bottom right corner have coordinates $(0.59,-1.0)$, $(1.0,-0.62)$ 
in Figure~\ref{fig:square_ori}, and $(0.86,-1.0)$, $(1.0,-0.88)$ in Figure~\ref{fig:square_ref}. 

\begin{figure}
	\centering
	\subcaptionbox{Approximation of the square\label{fig:square_ori}}
		{\includegraphics[width=0.4\textwidth]{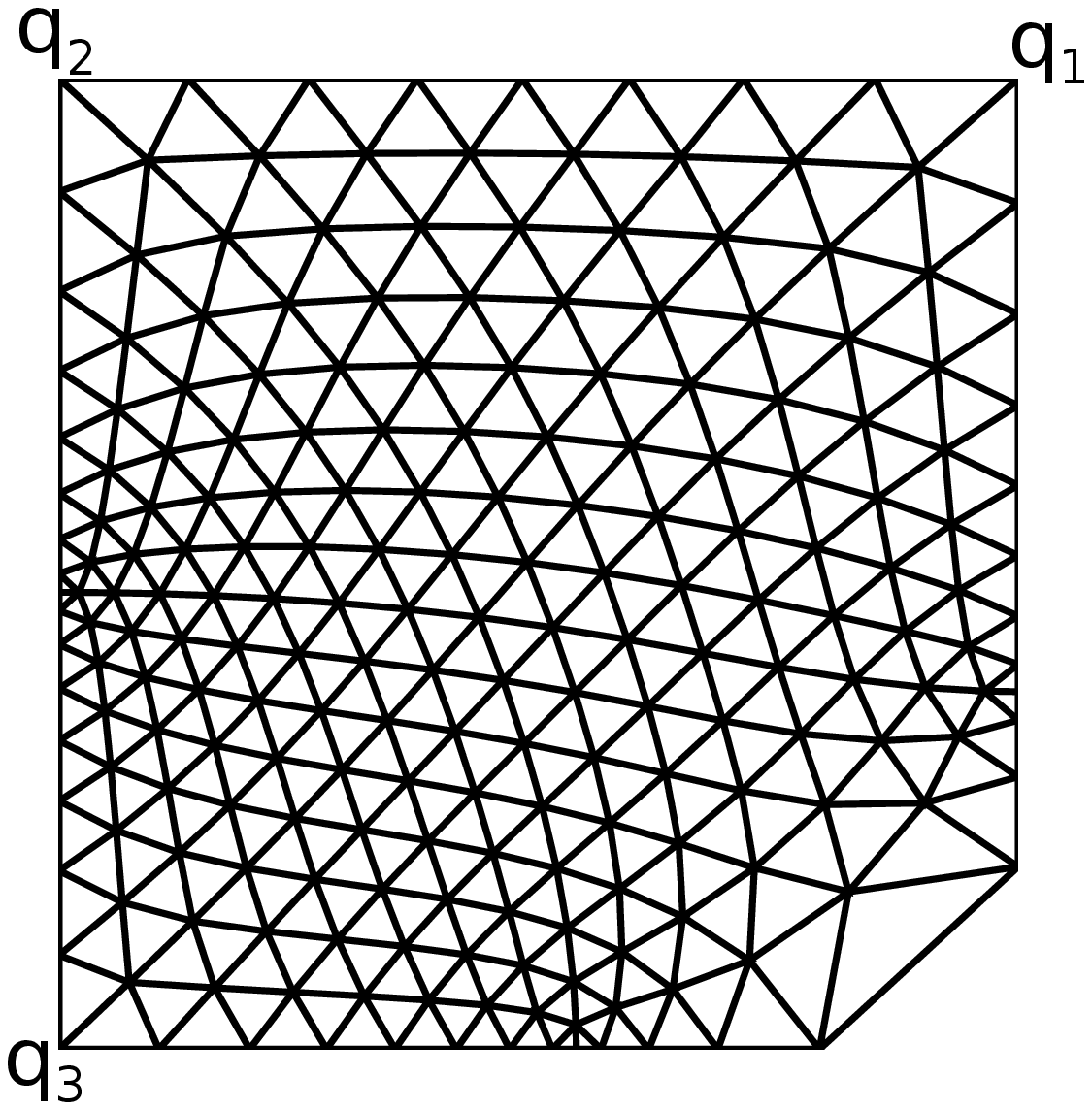}\quad}
	\subcaptionbox{4 points inserted\label{fig:square_ref}}
	{\quad\includegraphics[width=0.4\textwidth]{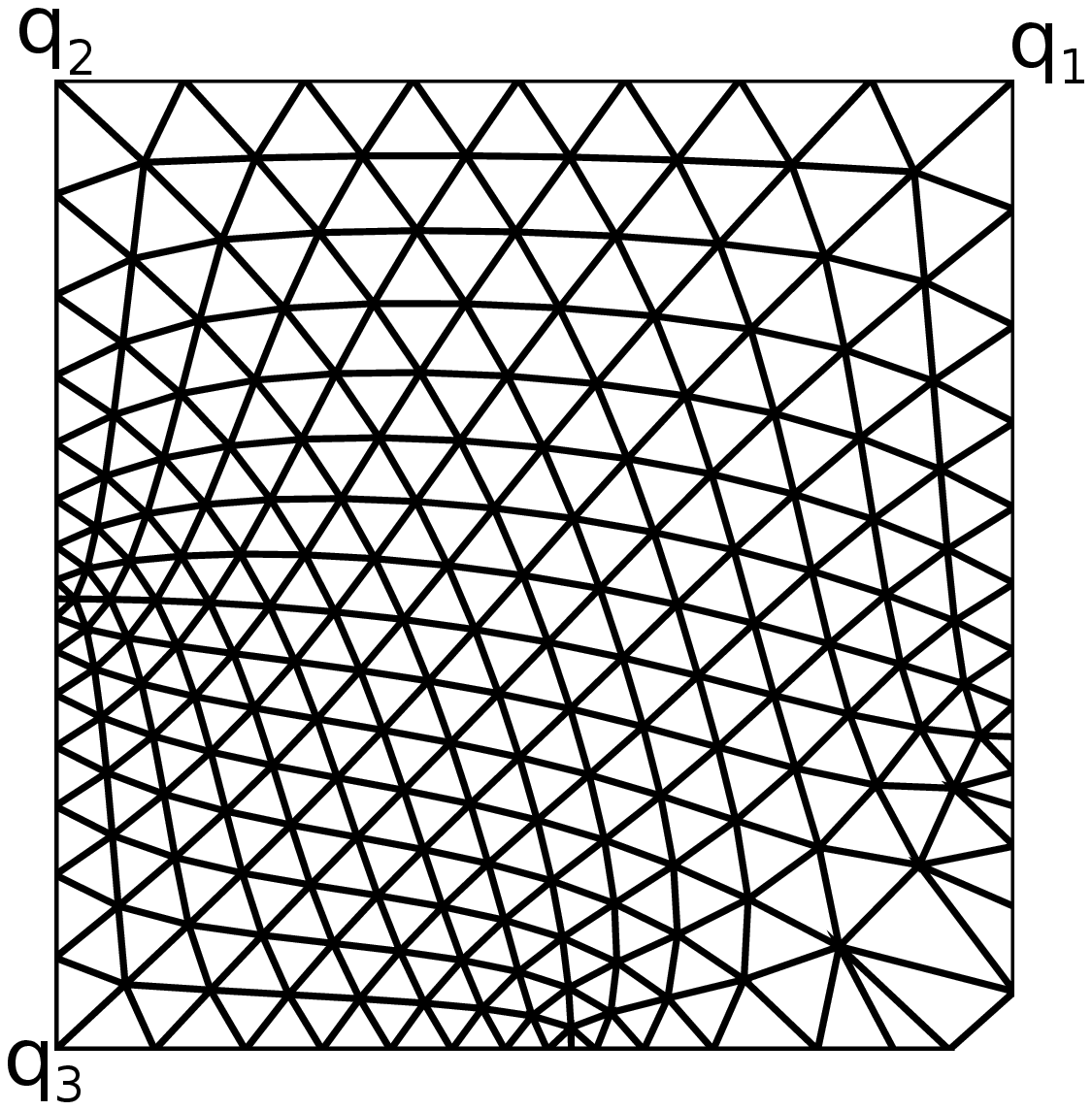}}
\caption{An approximation of the $[-1,1]\times[-1,1]$ square}
\label{fig:square}
\end{figure}

\FloatBarrier

\section{Partially free boundary}
\label{sec:pfb}
We now consider another problem that is to find a minimal surface with
a partially free boundary.  In this problem, the boundary to which we
map the circle consists of
the couple $\pfreeB$ where $\SSS$ is a
given closed surface in $\R^3$ and $\Gamma$ is a curve now connected to
$\SSS$ by two points $q_1$ and $q_3$. If the mappings presented
in Section~\ref{sec:minsurf} can be physically imagined as wire frames pulled
out of soapy water, the ones presented from now on can be seen as the film
created between the wire and an object connected to it.  Note that the surface of the
soapy water itself can be such an object. 
This case represents the situation \emph{while} we are pulling the frame out from the water.

Note that $\pfreeB$ can be modeled in
different ways.  One way is that $\Gamma$ is an open curve whose ending
points lie on $\SSS$. The other way is that one part of the
closed curve $\Gamma$ is ``merged'' into $\SSS$.
In this model, we shall refer to the points on that part of the curve as
\emph{surface points}. Note that we refer to $\SSS$ as
``surface'' since it is easily physically represented
by any surface, but it is of course in no case related to the minimal
surface we are looking for.

As an illustration, we give a numerical example of a minimal surface with partially free
boundary in Figure~\ref{fig:free}.   In the example, $\Gamma$ is
a part of a circle in $\R^3$ and $\SSS$ is (a convex bounded subset of)
a plane.

\begin{figure}
	\centering
	\subcaptionbox{$\Gamma$ is a small arc\label{fig:free1}}
		{\includegraphics[width=0.45\textwidth]{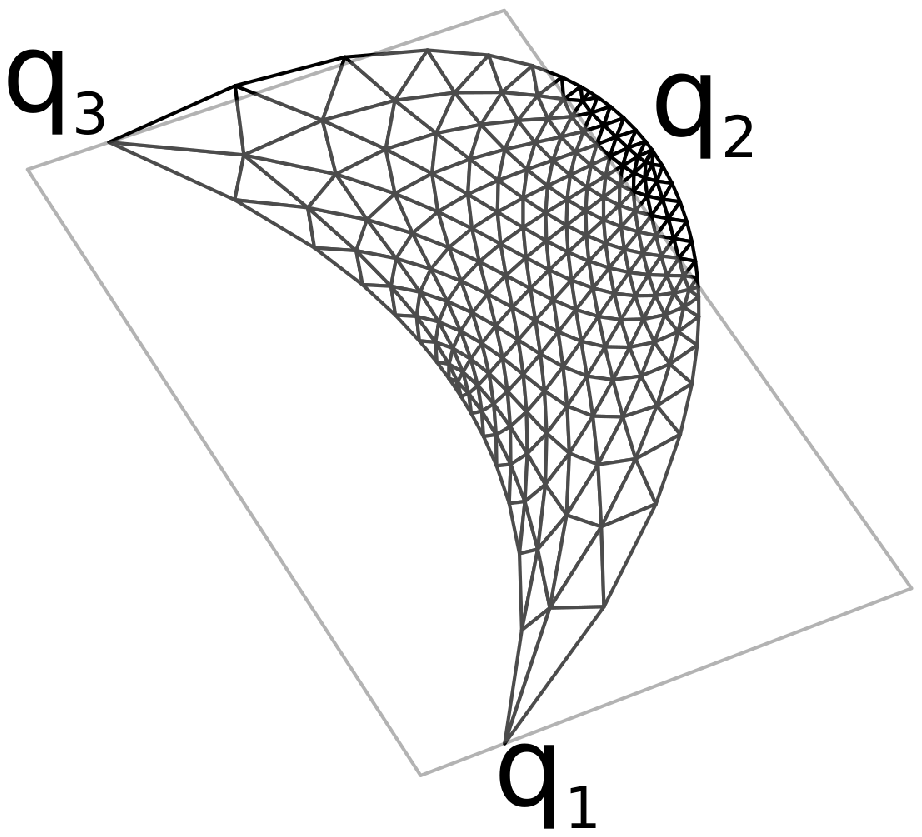}}
	\subcaptionbox{$\Gamma$ is a bigger arc of the same circle\label{fig:free2}}
		{\includegraphics[width=0.45\textwidth]{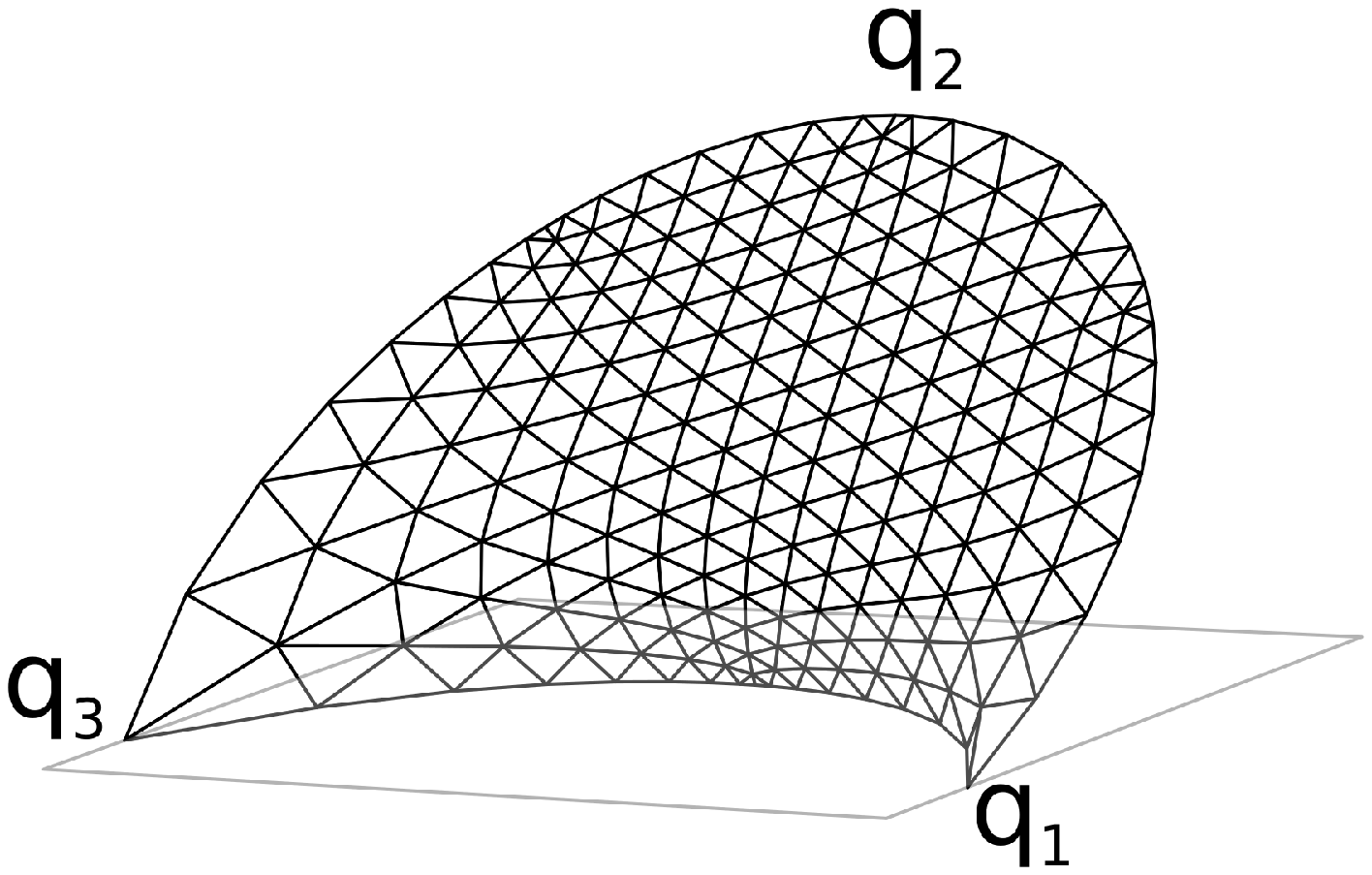}}
\caption{An example of minimal surface with partially free boundary}
\label{fig:free}
\end{figure}

We now define the problem rigorously.
Let $\Xi^1 \subset \partial D$ be the closed interval of $\partial D$
such that $p_2 \in \Xi^1$ and its end-points are $p_1$ and $p_3$.
Set $\Xi^2 := \partial D \backslash \Xi^1$.
We consider the following conditions for $\psi \in H^1(D;\R^3)$:
\begin{itemize}
 \item[(i)] $\psi|_{\Xi^2}(w) \in \SSS$ for almost all $w \in \Xi^2$.
 \item[(ii)] $\psi|_{\Xi^1}$ is continuous and monotone on $\Xi^1$
such that $\psi(\Xi^1) = \Gamma$ and $\psi(p_i) = q_i$, $i = 1, 3$.
\end{itemize}
Here, $\psi|_{\Xi^i}$ is the trace of $\psi$ on $\Xi^i$, $i=1,2$.
Then, the subset $X_{\pfreeB}$ is defined by
\begin{align*}
 X_{\pfreeB}:=\Bigl\{\psi \in H^1(D;\R^3)\Bigm|
  \text{$\psi$ satisfies (i) and (ii)} \Bigr\}.
\end{align*}

As is stated, we suppose that $\Gamma$ is connected to $\SSS$ by
$q_1$ and $q_3$, that is, $\Gamma \cap \SSS = \{q_1, q_3\}$.
Note that we have $\psi(p_i) = q_i$, $i=1,3$ for
$\psi \in X_{\pfreeB}$ by the definition.
We take $q_2 \in \Gamma$ and define
\begin{equation*}
 X_{\pfreeB}^{tp} := \Bigl\{\psi\in X_{\pfreeB}\Bigm| \psi(p_2)= q_2 \Bigr\}.
\end{equation*}
Then, as before, $\varphi \in X_{\pfreeB}^{tp}$ is a minimal surface
if and only if $\varphi$ is a stationary point of $\D$ in
$X_{\pfreeB}$.  If $X_{\pfreeB}^{tp}$ is not empty, there exists
a minimal surface in $X_{\pfreeB}^{tp}$ that attains the infimum of the
Dirichlet integral in $X_{\pfreeB}^{tp}$.  Again, that minimal surface
is called the \textit{Douglas-Rad\'o solution}. 
For the proof of existence of the Douglas-Rad\'o solution, see
\cite[Theorem~2, p.278]{DHS}.

The FE approximation of minimal surfaces with partially free
boundary is now almost obvious.
Define the subsets of $\mathcal{N}_{bdy}$
\[
   \mathcal{N}_{bdy}^1 := \{p \in \mathcal{N}_{bdy}
  : p \in \Xi^1\}, \qquad
   \mathcal{N}_{bdy}^2 := \mathcal{N}_{bdy} \backslash \mathcal{N}_{bdy}^1,
\]
and the discretizations of $X_{\pfreeB}$ and $X_{\pfreeB}^{tp}$ are defined by
\begin{alignat*}{2}
 X_{\pfreeB, h} & := \Bigl\{\psi\in (S_h)^d
  \Bigm| && \psi(\mathcal{N}_{bdy}^1) \subset \Gamma, \;
 \psi|_{\Xi^1} \text{ is $d$-monotone}, \\
  &  && \psi(\mathcal{N}_{bdy}^2) \subset \SSS, \;
   \psi(p_i)= q_i, i=1,3 \Bigr\}, \\
%
  X_{\pfreeB,h}^{tp} & := \Bigl\{\psi\in X_{\pfreeB}\Bigm|
   && \enspace \psi(p_2)= q_2 \Bigr\}.
\end{alignat*}
Stationary points in $X_{\pfreeB,h}^{tp}$ with respect to
the Dirichlet integral $\D$ are called $FE$
\emph{minimal surfaces with free boundary} on $\SSS$.
In particular, the minimizer of the Dirichlet integral in
$X_{\pfreeB,h}^{tp}$ is called
\textit{FE Douglas-Rad\'o solution}.

For convergence, we have the following theorem:

\begin{theorem}\label{Thm-pfreeB}
Suppose that $\pfreeB$ satisfies the following conditions:
\begin{itemize}
 \item $\Gamma$ is rectifiable,
 \item $\SSS$ is a bounded closed subset of a plane in $\R^3$,
 \item $\Gamma \cap \SSS = \{q_1, q_3\}$ and there is a rectifiable arc in $\SSS$ connecting $q_1$ and $q_3$.
\end{itemize}
Note that if $\pfreeB$ satisfies the above conditions,
$X_{\pfreeB}^{tp}$ is nonempty \cite[Theorem~2, p.278]{DHS}.
Suppose also that all the Douglas-Rad\'o solutions spanned in
$\pfreeB$ belong to $C(\overline{D};\R^3) \cap H^1(D;\R^3)$.

Let $\{\varphi_h\}_{h >0}$ be the sequence of FE Douglas-Rad\'o
solutions on triangulation $\T_h$ such that
$\varphi_h \in X_{\pfreeB,h}^{tp}$ and $|\T_h| \to 0$ as $h \to 0$.
Then, there exists a subsequence
$\{\varphi_{h_i}\} \subset X_{\pfreeB,{h_i}}^{tp}$ which converges to
one of the Douglas-Rad\'o solution $\varphi \in X_{\pfreeB}^{tp}$
spanned in $\pfreeB$ in the following sense$:$
\begin{equation}
    \lim_{h_i \to 0} \|\varphi_{h_i} - \varphi\|_{H^1(D;\R^3)}
     = 0,
   \label{convergence3}
\end{equation}
and if $\varphi \in W^{1,p}(D;\R^3)$, $p > 2$, then
\begin{equation}
 \lim_{h_i \to 0} \|\varphi_{h_i} - \varphi\|_{C(D\cup \mathscr{C};\R^3)} = 0,
   \label{convergence4}
\end{equation}
where $\mathscr{C} \subset \Xi^1$ is an arbitrary open arc contained in $\Xi^1$.
If the Douglas-Rad\'o solution is unique, then $\{\varphi_h\}$ converges
to $\varphi$ in the sense of \eqref{convergence3} and \eqref{convergence4}.
\end{theorem}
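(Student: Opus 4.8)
The plan is to follow the strategy used to prove \Cref{thm:conv} in \cite{Tsuchiya2,Tsuchiya3,Tsuchiya4}, modifying each step to accommodate the free-boundary constraint on $\Xi^2$. First I would establish a uniform energy bound $\sup_{h}\D(\varphi_h)<\infty$. Since $\varphi_h$ minimizes $\D$ over $X_{\pfreeB,h}^{tp}$, it suffices to exhibit, for each $h$, a competitor $\psi_h\in X_{\pfreeB,h}^{tp}$ with energy bounded independently of $h$: pick a Douglas-Rad\'o solution $\varphi\in X_{\pfreeB}^{tp}$ (nonempty by hypothesis), use rectifiability of $\Gamma$ to prescribe nodal values on $\mathcal{N}_{bdy}^1$ that are $d$-monotone and satisfy the three-point condition, send $\mathcal{N}_{bdy}^2$ along a fixed rectifiable arc of $\SSS$ joining $q_1$ and $q_3$, and let $\psi_h$ be the discrete harmonic extension. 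Because the discrete harmonic extension minimizes the discrete Dirichlet integral among piecewise linear maps with those nodal values, $\D(\psi_h)$ is dominated by the Dirichlet integral of a Scott--Zhang (or Cl\'ement) quasi-interpolant of a fixed $H^1$ extension of comparable boundary data, hence uniformly bounded; \eqref{eq:ineq} then controls the energy on all of $D$.

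Next I would extract a subsequence with $\varphi_{h_i}\rightharpoonup\varphi^{*}$ weakly in $H^1(D;\R^3)$, hence strongly in $L^2$ by Rellich, and, using the extension of \cite{Tsuchiya1} underlying \eqref{eq:ineq}, with traces converging in $L^2(\partial D;\R^3)$. I would then show $\varphi^{*}\in X_{\pfreeB}^{tp}$: passing to a trace subsequence converging a.e.\ on $\Xi^2$, the limit values lie in the fixed plane containing $\SSS$, and since the boundary nodes become dense and lie on $\SSS$ the limit trace lies on $\SSS$ a.e., giving (i); on $\Xi^1$, $d$-monotonicity and $\varphi^{*}(\Xi^1)=\Gamma$ pass to the limit, while the three-point condition $\varphi^{*}(p_i)=q_i$ follows from a Courant--Lebesgue equicontinuity estimate near $\partial D$, which in the discrete setting rests on the weak discrete maximum principle for discrete harmonic maps on regular quasi-uniform triangulations (\cite{Schatz}, \cite[Lemma~2.1]{Tsuchiya4}); this normalization is exactly what prevents the discrete parametrizations from degenerating. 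With $\varphi^{*}\in X_{\pfreeB}^{tp}$ in hand, I identify it as a Douglas-Rad\'o solution: weak lower semicontinuity gives $\D(\varphi^{*})\le\liminf\D(\varphi_{h_i})$, while applying the competitor construction of the first step to an arbitrary $\psi\in X_{\pfreeB}^{tp}$ yields $\limsup\D(\varphi_{h_i})\le\inf_{X_{\pfreeB}^{tp}}\D\le\D(\varphi^{*})$, so all inequalities are equalities and $\D(\varphi_{h_i})\to\D(\varphi^{*})$.

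Strong convergence \eqref{convergence3} then follows in the usual way: weak $H^1$ convergence together with $\|\nabla\varphi_{h_i}\|_{L^2}\to\|\nabla\varphi^{*}\|_{L^2}$ upgrades to norm convergence, hence strong convergence in $H^1(D;\R^3)$. For \eqref{convergence4}, under the extra assumption $\varphi^{*}\in W^{1,p}(D;\R^3)$ with $p>2$, Morrey's embedding gives interior H\"older continuity, and since $\varphi^{*}$ is harmonic with boundary data on the \emph{fixed} curve $\Gamma$ along any $\mathscr{C}\subset\subset\Xi^1$, boundary regularity holds up to $\mathscr{C}$; standard finite element estimates for discrete harmonic maps, interior and up to $\mathscr{C}$, then give uniform convergence on $D\cup\mathscr{C}$. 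No claim is made up to $\Xi^2$, where the trace is only constrained to lie on $\SSS$ and need not have comparable regularity. Finally, if the Douglas-Rad\'o solution is unique, the above subsequence argument applies to every subsequence, forcing convergence of the whole sequence $\{\varphi_h\}$.

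The step I expect to be the main obstacle is the third one — verifying $\varphi^{*}\in X_{\pfreeB}^{tp}$, and in particular that the three-point normalization survives the limit while the free portion of the boundary does not cause the discrete maps to concentrate or fold near $\partial D$. This requires a discrete Courant--Lebesgue argument compatible both with the prescribed arc on $\Gamma$ over $\Xi^1$ and with the free trace on $\SSS$ over $\Xi^2$; the remaining steps are adaptations of arguments already available for \Cref{thm:conv}.
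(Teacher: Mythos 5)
Your overall architecture (uniform energy bound, weak $H^1$ compactness, Rellich for the traces, verification that the weak limit is admissible, lower semicontinuity, strong-convergence upgrade, uniqueness giving full-sequence convergence) is the same as the paper's. The genuine gap is in the identification step. You claim $\limsup_{h_i}\D(\varphi_{h_i})\le\inf_{X_{\pfreeB}^{tp}}\D$ ``by applying the competitor construction of the first step to an arbitrary $\psi\in X_{\pfreeB}^{tp}$'', but your first step only produces discrete competitors with \emph{uniformly bounded} energy, not competitors whose energies converge to $\D(\psi)$; boundedness alone gives no limsup inequality. Moreover, for an arbitrary $\psi\in X_{\pfreeB}^{tp}$ the trace on $\Xi^2$ is defined only almost everywhere and $\psi$ need not be continuous, so ``its nodal values'' are not even available, and no recovery sequence for arbitrary admissible maps is constructed in your sketch. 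The paper closes this step differently, and this is exactly where the hypothesis that every Douglas-Rad\'o solution lies in $C(\overline{D};\R^3)\cap H^1(D;\R^3)$ --- a hypothesis your proposal never uses --- enters: take a Douglas-Rad\'o solution $\varphi$, let $\eta_h\in(S_h)^3$ be the discrete harmonic map agreeing with $\varphi$ at all boundary nodes (it lies in $X_{\pfreeB,h}^{tp}$ precisely because $\varphi$ is continuous, $\varphi(\Xi^1)=\Gamma$ monotonically with the three-point condition, and $\varphi(\Xi^2)\subset\SSS$ with $\SSS$ closed), and show $\lim_{h\to0}\|\varphi-\eta_h\|_{H^1(D)}=0$ by modifying Ciarlet's Theorem~3.2.3 with \eqref{eq:ineq}. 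Then $\D(\varphi_h)\le\D(\eta_h)\to\D(\varphi)=\inf_{X_{\pfreeB}^{tp}}\D$, which yields simultaneously your uniform bound and the missing limsup inequality, and together with weak lower semicontinuity identifies the weak limit as a Douglas-Rad\'o solution once admissibility is checked.

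On admissibility of the limit, your route (a discrete Courant--Lebesgue argument plus the discrete maximum principle to rescue the three-point condition) is heavier than the paper's, which simply invokes the uniform convergence of the polygonal curves $\varphi_{h_i}(\Xi^1)$ to $\Gamma$ (Lemma~3 of \cite{Tsuchiya3}) to obtain continuity, monotonicity, and $\psi(p_i)=q_i$ for the limit trace on $\Xi^1$. On $\Xi^2$, the paper uses planarity of $\SSS$ to conclude $\varphi_{h_i}|_{\Xi^2}\subset\SSS$ and then passes to the limit with the $L^2$ trace convergence \eqref{Rellish}; your ``boundary nodes become dense'' argument is weaker, since the \emph{images} of consecutive boundary nodes need not be close even when the nodes are, so density of nodes alone does not force the limit trace into $\SSS$. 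These points are repairable along the paper's lines; the recovery/limsup step is the real gap, and fixing it is also what makes the continuity assumption on the Douglas-Rad\'o solutions do its work.
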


\begin{proof}
Let $\varphi \in X_{\pfreeB}^{tp}$ be one
of the Douglas-Rad\'o solution and let $\eta_h \in (S_h)^3$ be the
FE solution such that
\begin{align*}
   \int_D \nabla \eta_h\cdot \nabla \mathbf{v}_h \dd x = 0, \quad
   \forall \mathbf{v}_h \in (S_h)^3, \quad
    \mathbf{v}_h|_{\partial D} = 0,
\end{align*}
with $\varphi(x) = \eta_h(x)$ for all $x \in \mathcal{N}_{bdy}$.

With the inequalities \eqref{eq:ineq}, we can modify the proof of
\cite[Theorem3.2.3]{Ciarlet} and prove that $\lim_{h \to 0}\|\varphi - \eta_h\|_{H^1(D)} = 0$.


By the definition of FE Douglas-Rad\'o
solutions, we have $\D(\varphi_h) \le \D(\eta_h)$, and hence
$\D(\varphi_h)$ is uniformly bounded.  Also,
$\|\varphi_h\|_{L^2(D)}$ is bounded because of the (discretized) maximum
principle.  Thus, $\|\varphi_h\|_{H^1(D)}$ is bounded, and there exists
a subsequence $\{\varphi_{h_i}\}$ that 
converges weakly in $H^1(D;\R^3)$ to some $\psi \in H^1(D;\R^3)$.  In
the following, we show that $\psi$ belongs to $X_{\pfreeB}^{tp}$ and is
one of the Douglas-Rad\'o solution.

By the lower-semicontinuity of the Dirichlet integral with respect to
weak convergence in $H^1(D;\R^3)$ that is shown
in the proof of Theorem~1 \cite[p.276]{DHS}, we have
\[
   \D(\psi) \le \liminf_{h_i \to 0} \D(\varphi_{h_i})
   \le \lim_{h_i \to 0} \D(\eta_{h_i}) = \D(\varphi).
\]
The last equality follows from the fact
$\lim_{h \to 0} \|\varphi - \eta_h\|_{H^1(D)} = 0$.
Hence, if $\psi \in X_{\pfreeB}^{tp}$, we conclude that
$\psi$ is one of the Douglas-Rad\'o solutions.

Because $\varphi_{h_i}$ converges weakly in $H^1(D;\R^3)$ to $\psi$,
we have
\begin{align}
   \lim_{h_i \to 0} \|\varphi_{h_i} - \psi\|_{L^2(D;\R^3)} = 0,
  \qquad
  \lim_{h_i \to 0} \|\varphi_{h_i}|_{\partial D}
     - \psi|_{\partial D}\|_{L^2(\partial D;\R^3)} = 0
  \label{Rellish}
\end{align}
by Rellish's theorem, where
$\varphi_{h_i}|_{\partial D}$ and $\psi|_{\partial D}$ are
the traces of $\varphi_{h_i}$ and $\psi$ on $\partial D$.

$\varphi_{h_i}(\Xi^1)$ is a polygonal curve approximating $\Gamma$ ***AND***  Applying \cite[Lemma~3]{Tsuchiya3} to $\varphi_{h_i}(\Xi^1)$,
we notice that $\varphi_{h_i}(\Xi^1)$ converges uniformly to $\Gamma$
as $h_i \to 0$.  Hence, $\psi|_{\Xi^1}$ is continuous and monotone
such that $\psi(\Xi^1)$ with $\psi(p_i) = q_i$, $i = 1,2,3$.

By the assumptions on $\SSS$, $\SSS$ is a subset of a plane and $\varphi_{h_i}|_{\Xi^2}$ is a polygonal curve satisfying $\varphi_{h_i}(\mathcal{N}_{bdy}^2) \subset \SSS$. 
Therefore, we have $\varphi_{h_i}|_{\Xi^2} \subset \SSS$. 
Because of \eqref{Rellish}, we have $\psi|_{\Xi^2}(w) \in \SSS$ for almost all $w \in \Xi^2$.
Therefore, we conclude with $\psi \in X_{\pfreeB}^{tp}$ and the proof is completed.  $\square$
\end{proof}

The condition on $\SSS$ is rather restrictive.  The authors hope
the condition will be weakened in future.

\FloatBarrier

\section{Data structures and algorithm}
\label{sec:data}
The implementation of data structures for finite element methods and mesh refinement 
depends on several things, among which:
\begin{itemize}
	\item The computational object at the center of the computation. There are usually two choices: 
	nodes or elements of the triangulation. The main difference is that the latter being composed of the former, 
	either way, one need a strategy to go from nodes to elements and vice versa.
	\item The type of elements we deal with. In this paper, they are triangular elements.
	\item The refinement strategies, that is to say when and how refinement(s) occur: during the computation or a posteriori? Is it a bisection or a regular refinement?
\end{itemize}
Therefore, we describe, as an example, the choice we made for our own computation but it is left to the 
discretion of the reader to see if this fits their needs.\\

Our method is element-oriented (i.e.\ our computation treats triangles as the main objects). 
The nodes of the triangulation are ordered in a certain way. Accordingly, to each node corresponds an index. 
We refer to this index as the \emph{global index}.
However, since our method is element-oriented, we store this information as part of the information about elements. 
Therefore, we define an array \texttt{elements} describing the list of elements, in which, for $j=1,2,3$, 
\begin{center} \texttt{elements[i,j] =} global index of the $j$th node of element $i$. \end{center}
As the boundary is a special part, different from its mapped interior, we establish an ordered list of boundary nodes, so we have
\begin{center} \texttt{boundary[i] =} global index of the $i$th node of the boundary. \end{center}
Hence we can say that the node with global index \texttt{boundary[i]} has \emph{boundary index} $i$.

To indicate the location of the node with global index $i$, we use an array \texttt{status}, whose size is the number of nodes in the triangulation, 
and is at first initialized by
\begin{align*}\texttt{status[i] =} \begin{cases}
               0 & \mbox{if node } i \mbox{ is an interior node},\\
               \mbox{its boundary index} & \mbox{if } i \mbox{ is a boundary point},
            \end{cases}
\end{align*}
which is modified by 
\begin{align*}\texttt{status[i] =} \begin{cases}
									-\texttt{status[i]} & \mbox{if } i \mbox{ is a fixed point},\\
									\texttt{status[i]} + \mathbf{card}(\mathcal{N}_{bdy}) & \mbox{if } i \mbox{ is a free boundary node}.
            \end{cases}
\end{align*}

For some refinement algorithms, error estimation, or problems that ask, for example, 
search of triangles having common nodes, it is very useful to implement the relation 
between neighbour triangles. This is the case for the regular refinement strategy. 
To do so, we use an array \texttt{neighbours} where, for $j = 1,2,3$, 
\begin{align*}\texttt{neighbours[i,j] =} \begin{cases}
								k & \mbox{if triangle } k \mbox{ is neighbour to triangle } i \\ 
								& \mbox{and } j\mbox{th node of } i \mbox{ is not common to } i \mbox{ and } k,\\
								-1 & \mbox{if the edge opposite to } j \mbox{th node of } i \mbox{ is part of the boundary}.
								\end{cases}
\end{align*}
The neighbourhood relation is illustrated by Figure~\ref{fig:neighbour}.

Finally, we need a structure for the matrix to which we apply the relaxation procedure. 
For the problem of this paper, a large part of this matrix is filled by zeros. 
Consequently, a list of lists that contains only the necessary values for each element is preferable to a full $2$-dimensional matrix.

\begin{figure}
\centering
	\includegraphics[width=0.75\textwidth]{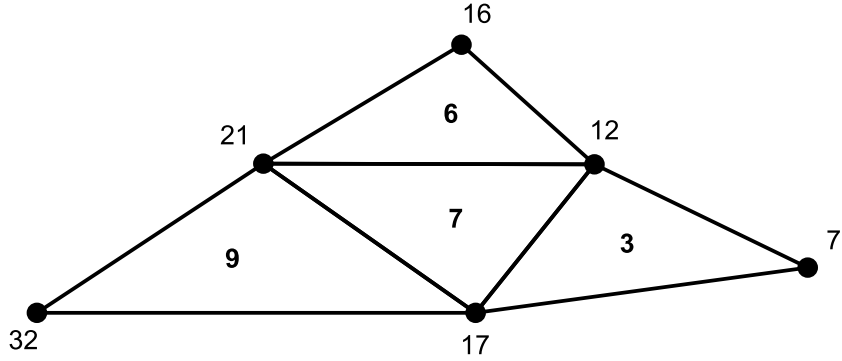}
	\caption{The neighbourhood of triangle $7$. elements$[7] = [21,17,12]$ so neighbours$[7] = [3,6,9]$}
\label{fig:neighbour}
\end{figure}

Algorithm~\ref{alg:rp} presents an overview of how the method is applied.

\begin{algorithm}
\renewcommand{\algorithmicrequire}{\textbf{Input:}}
\renewcommand{\algorithmicensure}{\textbf{Output:}}

\caption{Relaxation process}
\label{alg:rp}

\begin{algorithmic}[1]
\Require {elements, boundary, node\_status, neighbours, functions describing $\Gamma$, global matrix}
\Statex\hrulefill

\State $count \gets 0$
\State $C \gets$ number of iterations between two quality checks of the boundary
\Repeat 
	\If {$count \equiv 0 \pmod C$}
		\State \Call{checkTriangles}{}
	\EndIf
	\For {$i:1 \to $ number of nodes}
		\If {$status[i] < 0$} \Comment fixed point
			\State \textbf{continue}
		\ElsIf {$status[i] == 0$} \Comment inner point
			\State Relaxation by Gauss-Seidel method
		\ElsIf {$status[i] > 0$} \Comment boundary node
			\State Relaxation by Newton method	
			\If {$status[i] > \mathbf{card}(\mathcal{N}_{bdy})$} \Comment free boundary node
				\State Project node on surface.
			\EndIf
		\EndIf
	\EndFor
	\State $count \gets  count+1$
\Until being under the relaxation tolerance threshold

\Statex

\Procedure{checkTriangles}{}
	\For {$i:1 \to$ number of boundary nodes}
		\If {Criterion for insertion is met}
			\State $ele \gets$ element containing $boundary[i]$ and $boundary[i+1]$
			\State \Call{Bisection}{$ele$}
			\State \textbf{or}
			\State \Call{RegularRefinement}{$ele$}
		\EndIf
	\EndFor
\EndProcedure

\hrulefill
\Ensure Finite element approximation of the desired minimal surface

\end{algorithmic}
\end{algorithm}

\FloatBarrier

\section{Conclusion}
The refinement method presented in this paper has two main advantages.

First, it reduces the impact of the initial choices of the fixed points and their images. 
One can choose those in any way they want, the approximation obtained 
will never be collapsed and it will be close to the original contour. 

Second, it allows us to reduce the number of nodes in the initial triangulation. 
As examples showed, it is not always necessary to substantially increase the number of nodes of the initial triangulation to obtain a good enough approximation. 
The described method refines the boundary very locally so only a necessary amount of points is inserted. 

In the future, we will study how this simple method 
can be helpful to tackle more complex problems.
We extended our study of finite element approximations to minimal surfaces with partially free boundary
and we hope that the restrictions for the proof of convergence can be reduce.
We will study partially free boundary problems with more complex surfaces than planar ones 
and we will challenge the Douglas-Plateau problem, 
where one looks for a minimal surface spanned in a system 
of several Jordan curves, such as a pyramid or a cube.


\bibliographystyle{spmpsci}      
\bibliography{biblio}   

\end{document}